\theoremstyle{plain}
\newtheorem{theorem}{Theorem}
\newtheorem{lemma}[theorem]{Lemma}
\newtheorem{example}[theorem]{Example}
\newtheorem{cor}[theorem]{Corollary}
\theoremstyle{definition}
\newtheorem{deff}[theorem]{Definition}
\newtheorem{rem}[theorem] {Remark}
\newtheorem{algo}[theorem]{Algorithm}
\newtheorem{openq}[theorem]{Question}
\renewcommand{\P}{\mathbb{P}}
\newcommand{\N}{\mathbb{N}}
\newcommand{\R}{\mathbb{R}}
\newcommand{\Z}{\mathbb{Z}}
\renewcommand{\phi}{\varphi}
\renewcommand{\epsilon}{\varepsilon}
\newcommand{\ord}{\mathrm{ord}}
\newcommand{\rad}{\mathrm{rad}}
\newcommand{\Mid}{\mathrel\|}
\begin{document}

\title[Fractions with Euler's Totient Function]%
{Density Properties of Fractions with Euler's Totient Function}
\author{Karin Halupczok and Marvin Ohst}

\address{K.~Halupczok, M.~Ohst: Mathematisches Institut der
Heinrich-Heine-Universit{\"a}t D{\"u}sseldorf, Universit{\"a}tsstr. 1,
40225 D{\"u}sseldorf, Germany}
\email{karin.halupczok@uni-duesseldorf.de, marvin.ohst@uni-duesseldorf.de}

\keywords{Euler's totient function, Schinzel density, Arnold's question}
\subjclass[2020]{11A07, 11N37, 11N56, 11N69}

\begin{abstract}
  We prove that for all constants $a\in\N$, $b\in\Z$, $c,d\in\R$, $c\neq 0$,
  the fractions $\phi(an+b)/(cn+d)$ lie
  dense in the interval $]0,D]$ (respectively $[D,0[$ if $c<0$), 
  where $D=a\phi(\gcd(a,b))/(c\gcd(a,b))$. 
  This interval is the largest possible, since
  it may happen that isolated fractions lie outside of the interval:
  we prove a complete determination of the case where this
  happens, which yields an algorithm that calculates the
  amount of $n$ such that $\rad(an+b)|g$ for coprime $a,b$ and any $g$.
  Furthermore, this leads to an interesting open question which is
  a generalization of a famous problem raised by V.~Arnold.
  For the fractions $\phi(an+b)/\phi(cn+d)$ with constants
  $a,c\in\N,b,d\in\Z$, we prove that 
  they lie dense in $]0,\infty[$ exactly if $ad\neq bc$.
\end{abstract}
\maketitle

\section{Introduction}
\label{sec1}

Let $\phi$ denote Euler's totient function
for which
\[\phi(n)=n\prod_{p\mid n}\Big(1-\frac{1}{p}\Big).\]
In 1954, Schinzel and Sierpi\'nski \cite{schsp}
proved that for positive integers $n$, the fractions $\phi(n)/n$
lie dense in the interval $]0,1]$, and in \cite{schinzel}, Schinzel
proved that
the fractions $\phi(n+1)/\phi(n)$ lie dense in $]0,\infty[$.
Many research papers on related topics followed.

Recently in 2019,  Garcia, Luca,
Shi and Udell proved in \cite{GLSU}
Schinzel-type-theorems on the density of the fractions
$\phi(p + 1)/\phi(p)$ and \linebreak $\phi(p)/\phi(p - 1)$
when $p$ runs over the set of primes. 

Here we consider fractions of the form $\phi(an+b)/(cn+d)$
and $\phi(an+b)/\phi(cn+d)$ with
appropriate constants $a,b,c,d$,
but let $n$ run through
all positive integers that are at least as big as an appropriate starting value.

It can easily be observed that it suffices to restrict to the case $a,c>0$.
The maximal length $D$ of the target interval depends then explicitly
on the parameters $a,b,c$, and on the greatest common divisor
$(a,b)$ of $a$ and $b$. 

\medskip
The following theorem is our first main result.

\begin{theorem}\label{Thm3.1}
Let $a\in\N$, $b\in\Z$, $c\in\R_{>0}$, $d\in\R$ be constants. Put
\[D:=\frac{a\phi((a,b))}{c(a,b)}\]
and
\[M:=\left\{ n\in\N;\ n>\max\Big\{-\frac{b}{a},-\frac{d}{c}\Big\} \text{ and } 
 \frac{\phi(an+b)}{cn+d}\leq D\right\}.\]
Then the set
\[L:=\left\{
\frac{\phi(an+b)}{cn+d};\ 
n\in M
\right\}\]
is dense in the real interval $]0,D]$.
Furthermore, for arbitrary $t\in\N$, we can restrict $L$ by any of the following
conditions on $n$ such that $L$ remains dense in $]0,D]$.
\begin{enumerate}
\item $\frac{an+b}{(a,b)}$ is squarefree,
\item $\frac{an+b}{(a,b)}$ and $t$ are coprime,
\item there is at most one prime factor $p$ of $\frac{an+b}{(a,b)}$
  such that $p\not\equiv1\mod a$, and if it exists, it has
  multiplicity 1 and $a>1$.
\end{enumerate}
\end{theorem}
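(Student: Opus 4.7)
The plan is to reduce to Schinzel--Sierpi\'nski's classical density of $\phi(n)/n$ in $]0,1]$ by a rescaling, and then to refine the construction to accommodate the conditions (1)--(3). Write $g:=(a,b)$, $\alpha:=a/g$, $\beta:=b/g$, so that $(\alpha,\beta)=1$ and $an+b=g(\alpha n+\beta)$. Whenever $\gcd(\alpha n+\beta,g)=1$, multiplicativity of $\phi$ gives $\phi(an+b)=\phi(g)\phi(\alpha n+\beta)$, and since $(\alpha n+\beta)/(cn+d)\to\alpha/c=a/(cg)$ one has
\[\frac{\phi(an+b)}{cn+d}\;=\;D\cdot\frac{\phi(\alpha n+\beta)}{\alpha n+\beta}+o(1)\]
as $n\to\infty$ inside any such residue class. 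Density in $]0,D]$ will thus reduce to density of $\phi(\alpha n+\beta)/(\alpha n+\beta)$ in $]0,1]$ along a suitably chosen set of $n$'s.

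Given a target $x\in\,]0,D]$ and $\epsilon>0$, set $y:=x/D\in\,]0,1]$. Using the divergence of $\sum 1/p$ along the progression $p\equiv1\pmod a$ (needed only for~(3); otherwise any sufficiently large primes suffice), I first choose finitely many primes $p_1<\cdots<p_{k-1}$, all coprime to $at$ and, for~(3), additionally $\equiv1\pmod a$, with $\prod_{i}(1-1/p_i)$ within $\epsilon$ of $y$. Set $P:=p_1\cdots p_{k-1}$. The Chinese remainder theorem then yields a residue $n\equiv n_0\pmod{gtP}$ with $\alpha n_0+\beta\equiv0\pmod P$ and $\gcd(\alpha n_0+\beta,gt)=1$: for each prime $p\mid gt$, either $p\nmid\alpha$ and one avoids the single bad residue $\pmod p$, or $p\mid\alpha$ and then $\alpha n+\beta\equiv\beta\not\equiv0\pmod p$ automatically, because $(\alpha,\beta)=1$.

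Writing $n=n_0+sgtP$, the quotient $q:=(\alpha n+\beta)/P$ runs through the arithmetic progression $q_0+s\alpha gt$ with $\gcd(q_0,\alpha gt)=1$, so Dirichlet's theorem supplies infinitely many prime values of $q$. For any such $q$ large enough, $q$ is coprime to $gtP$, hence $an+b=gPq$ with $Pq$ squarefree and coprime to $gt$, and
\[\frac{\phi(an+b)}{cn+d}\;=\;\frac{\phi(g)\phi(P)(q-1)}{cn+d}\;\xrightarrow[q\to\infty]{}\;D\cdot\frac{\phi(P)}{P},\]
which lies within $D\epsilon$ of $x$. Conditions (1)--(3) follow immediately from the shape $Pq$: it is squarefree, coprime to $t$, and in case~(3) the only prime factor possibly $\not\equiv1\pmod a$ is $q$, occurring to the first power (here $a>1$ is used to guarantee that infinitely many primes lie in $1\pmod a$ and hence that the construction of the $p_i$ is possible).

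The hardest step will be the coordinated CRT bookkeeping: one must simultaneously force $\alpha n+\beta$ to be divisible by $P$, coprime to $gt$, and such that $(\alpha n+\beta)/P$ lies in an arithmetic progression whose first term is coprime to the modulus so that Dirichlet applies, all while respecting the progression constraint $p_i\equiv1\pmod a$ from~(3). Once these compatibilities are checked, the limit computation above and the Schinzel/Sierpi\'nski-style approximation $\prod(1-1/p_i)\approx y$ (together with the easy verification that $n$ eventually exceeds $\max\{-b/a,-d/c\}$ and lies in $M$) are routine.
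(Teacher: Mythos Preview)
Your argument is correct and reaches the same endpoint as the paper---constructing $(an+b)/g$ as a squarefree product $Pq$ where $P$ is a product of large primes $\equiv 1\pmod a$ and $q$ is a single auxiliary prime---but the technical machinery differs. The paper takes $P$ to be \emph{all} primes $\equiv 1\pmod a$ in an interval $(e^{Ak},e^{Bk}]$ and invokes the quantitative Mertens theorem in arithmetic progressions (Lemma~\ref{Thm2.4}) to obtain $\prod_{p\mid P}(1-1/p)\sim(A/B)^{1/\phi(a)}=x/D$ directly; the auxiliary prime is then simply any large prime $\equiv b'\pmod{a'}$, so that $m_k:=gP\cdot p^*_{f(k)}$ is automatically $\equiv b\pmod a$ and no CRT bookkeeping is needed. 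Your route replaces Mertens by the weaker input that $\sum_{p\equiv 1\,(a)}1/p$ diverges, approximating $y=x/D$ by a finite hand-picked product, and then manufactures the congruence via CRT followed by Dirichlet applied to the cofactor $q$. Your approach is slightly more elementary and makes the coprimality with $t$ very explicit; the paper's is shorter once Lemma~\ref{Thm2.4} is in hand and produces a single sequence $n_k$ with $\phi(an_k+b)/(cn_k+d)\to x$ rather than a two-stage $\epsilon$-approximation. One small remark: your parenthetical about $a>1$ is off---when $a=1$ every prime satisfies $p\equiv 1\pmod 1$, so the progression is all primes and condition~(3) is vacuous; the clause ``and $a>1$'' in~(3) simply records that an exceptional prime can only exist when $a>1$, and is automatic in both constructions.
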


We give the proof in Section~\ref{sec3}.

Furthermore, we prove a generalization to $n$ in an arbitrary arithmetic
progression $r$ mod $m$,
Corollary~\ref{Cor3.3}, which follows almost directly from
Theorem~\ref{Thm3.1}. We also deduce that the $\limsup$
of the fractions is $D$ and
the $\liminf$ is $0$, stated as Corollary~\ref{Thm3.4}.

\medskip
As our second main result, we prove in Section~\ref{sec4} the following.

\begin{theorem}\label{Thm4.2}
Let $a,c\in\N$ and $b,d\in\Z$ be constants and define
\[L:=\left\{
\frac{\phi(an+b)}{\phi(cn+d)};\ 
n\in\N,\ 
n>\max\Big\{-\frac{b}{a}, -\frac{d}{c}\Big\}
\right\}.\]
If $ad\neq bc$, then $L$ is dense in $]0,\infty[$.
If $ad=bc$, then $\# L=2^{\omega(\tau_c(a)\tau_a(c))}$,
where the exponent is the number of primes dividing only 
one of the numbers
$a,c$, specified by Definition~\ref{def1.1}
and $\omega(n):=\#\{p\mid n\}$.
\end{theorem}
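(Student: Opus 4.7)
I would split the proof into the degenerate case $ad=bc$ and the generic case $ad\neq bc$, which require quite different arguments. For $ad=bc$, the strategy is to extract a common linear factor. Setting $g:=\gcd(a,c)$, $\alpha:=a/g$, $\gamma:=c/g$, the coprimality of $\alpha,\gamma$ combined with $\alpha d=\gamma b$ forces $\alpha\mid b$; writing $b=\alpha e$ then yields $d=\gamma e$. Thus with $k:=gn+e$ one has $an+b=\alpha k$ and $cn+d=\gamma k$, and expanding $\phi$ gives
\[
\frac{\phi(an+b)}{\phi(cn+d)}=\frac{\alpha}{\gamma}\cdot\frac{\prod_{p\mid\alpha,\,p\nmid k}(1-1/p)}{\prod_{p\mid\gamma,\,p\nmid k}(1-1/p)},
\]
so the ratio depends on $n$ only through the set $T(n):=\{p\text{ prime}:p\mid\alpha\gamma\text{ and }p\mid k\}$. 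For $p\mid g$ the condition $p\mid k$ is fixed (equivalent to $p\mid e$), while for $p\mid\alpha\gamma$ with $p\nmid g$ one has $\gcd(g,p)=1$, so by CRT any prescribed pattern on such primes is realized by infinitely many admissible $n$. This gives at most $2^N$ achievable values with $N:=\#\{p\mid\alpha\gamma:p\nmid g\}$, and since a prime divides exactly one of $a,c$ iff it divides $\alpha\gamma$ but not $g$, we have $N=\omega(\tau_c(a)\tau_a(c))$.

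To obtain equality, I would prove that distinct patterns $T$ give distinct ratios. Supposing two ratios coincide for distinct $T_1,T_2$, cross-multiplying and cancelling common factors leads to an identity
\[
\prod_{p\in A}(p-1)\cdot\prod_{p\in B}p=\prod_{p\in A}p\cdot\prod_{p\in B}(p-1)
\]
for some disjoint finite sets of primes $A,B$ with $A\cup B\neq\emptyset$. Taking the $q$-adic valuation at $q:=\max(A\cup B)$, WLOG $q\in A$, gives $v_q=1$ on the right (from the factor $q$ in $\prod_A p$) and $v_q=0$ on the left (since $v_q(p-1)=0$ for every prime $p\leq q$, and $q\notin B$), a contradiction. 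Hence $\#L=2^{\omega(\tau_c(a)\tau_a(c))}$.

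For $ad\neq bc$ I would prove density by a Schinzel--Sierpi\'nski-type construction. Writing
\[
\frac{\phi(an+b)}{\phi(cn+d)}=\frac{an+b}{cn+d}\cdot\frac{\prod_{p\mid an+b}(1-1/p)}{\prod_{p\mid cn+d}(1-1/p)}
\]
and noting that the first factor tends to $a/c$, the aim is to approximate any prescribed target $y>0$ by $(a/c)\prod_{p\in S}((p-1)/p)^{\sigma_p}$ for some finite set $S$ of primes and signs $\sigma_p\in\{\pm1\}$, which is possible because $\sum_p 1/p$ diverges and $\log((p-1)/p)\to 0$. Given such an approximation accurate to $\varepsilon/2$, I would impose by CRT on $n$ the conditions: for each $p\in S$ with $\sigma_p=+1$, $p\mid an+b$ and $p\nmid cn+d$ (and the reverse for $\sigma_p=-1$); and for each prime $p\notin S$ below a buffer threshold $X$, $p\nmid (an+b)(cn+d)$. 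These congruences are jointly consistent because $p\nmid ad-bc$ implies $p$ cannot divide both linear forms simultaneously, and the finitely many $p\mid ad-bc$ can be excluded from $S$ and the buffer.

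The main obstacle is then to show that inside the resulting arithmetic progression there exist $n$ for which the large prime factors of $an+b$ and $cn+d$ (those above $X$) contribute only $1\pm\varepsilon/2$ to the two products of $(1-1/p)$. This is a two-sided sieve/Dirichlet input, analogous to the machinery behind Theorem~\ref{Thm3.1}: a typical route is to apply Dirichlet's theorem inside the progression to realise $an+b$ as a prescribed small factor times a large prime, while separately controlling the $\phi$-ratio of $cn+d$. Achieving this simultaneous control on \emph{both} linear forms---not needed in Theorem~\ref{Thm3.1}, which only controls one---is the technically most delicate step of the argument.
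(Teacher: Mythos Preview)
Your treatment of the case $ad=bc$ is correct and close in spirit to the paper's. The paper parametrizes via the functions $\tau$ and $\gamma$ of Definition~\ref{def1.1} rather than extracting the linear factor $k=gn+e$ explicitly, but the content is the same: the ratio depends only on which primes of $\tau_c(a)\tau_a(c)$ divide $an+b$ (resp.\ $cn+d$), those choices are free by CRT, and distinct choices give distinct values. Your injectivity argument via the $q$-adic valuation at the largest prime is a pleasant alternative to the paper's Lemma~\ref{Lemma fuer Zaehlen von phi Werten}(ii), which instead reduces to $t\mid\phi(t)\Rightarrow t=1$.

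The case $ad\neq bc$, however, has a genuine gap. Your final paragraph proposes to realise $an+b$ as a prescribed small factor times a large prime via Dirichlet, ``while separately controlling the $\phi$-ratio of $cn+d$''. But once Dirichlet has selected $n$, there is no remaining freedom: $cn+d$ is then a specific integer, and nothing prevents it from having many prime factors just above your buffer $X$, ruining the approximation. Controlling one form by Dirichlet and the other ``separately'' is not a mechanism. The paper solves this in two steps. First, the identity of Lemma~\ref{Thm2.3}(iii) gives
\[
\frac{\phi(an+b)}{\phi(cn+d)}=\frac{\gamma_{cn+d}(a)\,\phi(\tau_{cn+d}(a))}{\gamma_{an+b}(c)\,\phi(\tau_{an+b}(c))}\cdot\frac{\phi(acn+bc)}{\phi(acn+ad)},
\]
reducing to $\phi(N+bc)/\phi(N+ad)$ with $N=acn$; restricting $n$ modulo $ac$ makes the prefactor constant. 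Second, for this $a=c=1$ case (Lemma~\ref{Lem4.1}), Schinzel's construction is applied \emph{symmetrically}: one builds products $A_k,B_k$ of consecutive primes with $\Phi(B_k)/\Phi(A_k)\to z'$, imposes $A_k\mid n_k+d$ and $B_k\mid n_k+b$ simultaneously by CRT, places $n_k$ in the window $[FA_kB_k,2FA_kB_k]$ so that the cofactors $x_k,y_k$ are at most $A_kB_k$, and then shows their large prime factors are few enough (fewer than $k+\ell(k)$) and individually large enough (exceeding $p_{h+k+\ell(k)}$) to contribute $1+o(1)$. The essential point is that both linear forms are governed by the \emph{same} CRT system and the \emph{same} size bound; neither is handled by an appeal to Dirichlet. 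Your sketch does not supply this mechanism, and the route you indicate would not produce it.
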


The proof is split into Theorem~\ref{Thm4.2 dense case} and \ref{Thm4.2 not dense case}.

In Section~\ref{sec5.1}, we consider the case when $M=\N_{>-b/a}\cap \N_{>-d/c}$
in Theorem~\ref{Thm3.1}, that is, when there are no fractions outside
the density interval $]0,D]$.
We characterize this condition using
$\rho(a,b)=\#\{n>-b/a;\ \rad((an+b)/(a,b))\mid (a,b)\}$,
and give a full characterization in Theorem~\ref{Thm3.6}.

In Section~\ref{sec5.2}, we determine $\rho(a,b)$ for some pairs.

By Theorem~\ref{Thm3.8}, relevant pairs $a,b$ with $\rho(a,b)=\infty$
are given by the condition
\begin{equation}\label{eq:cond3.8}
  \exists \alpha_{1},\dots,\alpha_{s}\in\N_{0}, \alpha_{1}+\dots+\alpha_{s}>0:
  q_{1}^{\alpha_{1}}\cdots q_{s}^{\alpha_{s}} \equiv \frac{b}{(a,b)}
  \mod\frac{a}{(a,b)},
\end{equation}
provided that $q_{1},\dots, q_{s}$ are the prime divisors
of $(a,b)$ that are coprime to $a/(a,b)$.

These results lead us to algorithms that make these pairs visible by a diagram
for $1\leq a\leq 2001$ and $-1000\leq b\leq 1000$.
Some of the visible lines in this diagram are explained by Theorem~\ref{Thm3.9}.

In Section~\ref{sec5.3}, we prove an asymptotic formula
for the number of $n\leq x$ in the set defining $\rho(a,b)$
in the case when $\rho(a,b)=\infty$. The result is stated
in Theorem~\ref{Thm3.11}.

Finally, Section~\ref{sec5.4} is dedicated to counting the
pairs $a,b$ such that condition \eqref{eq:cond3.8} holds. This leads
to an interesting generalization of Arnold's question on the average
value of the multiplicative order of numbers: We raise the question
if the number of these pairs is $=o(x^{2})$ or not, see the precise
formulation in Question~\ref{openq}.

% -------------------
\bigskip
\textbf{Notation.} $\Z$ denotes the set of all integers,
$\N$ the set of all
positive integers, and let $\N_0:=\N\cup\{0\}$. The symbol $\P$ denotes the set
of all primes (in $\N$), and $\R$ the set of real numbers.
By the letters $p$ and $q$, we always denote primes,
and for $k\in\N$, let $p_k$ denote the $k$-th prime, whereas
$q_k$ stands for a prime factor of some integer.

For $a,b\in\Z$, %
let $(a,b):=\gcd(a,b)$ with the convention $\gcd(0,0):=0$.
For coprime $a,b\in\Z$, $b>0$,
let $\ord_b(a)$ denote the multiplicative order
of $a$ modulo $b$, i.e.\ $\ord_b(a)$ is the smallest positive integer $n$ such
that $a^n\equiv1\mod b$. 
For a prime $p$ and $k\in\N$, $n\in\Z$, we write
$p^k\Mid n$ if $p^k\mid n$ and $p^{k+1}\nmid n$, i.e., $p^k$ is the exact
prime power of base $p$ that divides $n$.

More generally, we write $m\Mid n$ for $m\in\N$,
if $\frac{n}{m}$ and $m$ are coprime.
A substack $p\mid n$ under a sum or product sign means that the sum or
product extends over all prime divisors of $n$, and
a substack $p^k\Mid n$ under a sum or product sign means that the sum or
product extends over all prime powers dividing $n$ exactly.
Additional restrictions may also appear.

For a nonzero integer $n$, we denote the radical or squarefree kernel of $n$ as
\[\rad(n):=\prod_{p\mid n}p.\]

For some $s\in\R$ and two functions $f,g:[s,\infty[ \to\R$, we write
$f(x)\sim g(x)$
    if $\lim_{x\to\infty}f(x)/g(x)=1$, and
    $f(x)=o(g(x))$, if $\lim_{x\to\infty}f(x)/g(x)= 0$.
    We write $f(x)=O(g(x))$ if
    there is a constant $C>0$ such that $|f(x)|\leq Cg(x)$ for all
    $x\in[s,\infty[$. 
    Finally, $\log(x)$ denotes the natural logarithm.

%---------------

\section{Auxiliary Lemmas}
\label{sec2}

\begin{deff}\label{def1.1}
For $a\in\Z$, $a\neq0$ and $n\in\Z$ let
\[\gamma_n(a):=\prod_{\substack{p^k\Mid a\\p\mid n}} p^k
\text{ and }
\tau_n(a):=\prod_{\substack{p^k\Mid a\\p\nmid n}} p^k,\]
thus $\gamma_n(a)$ is the prime factor part of $a$ that shares all
prime divisors with $n$, and $\tau_n(a)$ is the prime factor part of
$a$ whose prime divisors are coprime to $n$.
\end{deff}
We summarize some basic properties of these arithmetic functions
that can easily
be deduced from their definition and by unique prime factorization.
\begin{lemma}\label{lemma1}For $a\in\Z$, $a\neq0$ and $n\in\Z$ we have
\begin{enumerate}[(i)]
\item $\gamma_n(a)\cdot\tau_n(a)=a$,
\item $\rad(\gamma_n(a))\mid n$,
\item $(\tau_n(a),n)=1$,
\item $\gamma_n(a)=\gamma_{\rad(n)}(a)$ and $\tau_n(a)=\tau_{\rad(n)}(a)$
  (for $n\neq 0$),
\item $(\gamma_n(a),\tau_n(a))=1$.
\end{enumerate}
\end{lemma}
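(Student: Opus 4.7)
The plan is to treat all five items as direct consequences of unique prime factorization, unpacking the definitions of $\gamma_n(a)$ and $\tau_n(a)$. The unifying idea is that the set $\{p^k : p^k\Mid a\}$ of exact prime powers of $a$ is finite and is partitioned into two disjoint subsets according to whether $p\mid n$ or $p\nmid n$; the two factors in the lemma correspond exactly to this dichotomy.

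First I would prove (i): by unique factorization $a=\prod_{p^k\Mid a}p^k$, and splitting the product according to the binary condition $p\mid n$ versus $p\nmid n$ yields the two products defining $\gamma_n(a)$ and $\tau_n(a)$, whose product is therefore $a$. Then (ii) is immediate from the definition, since every prime $p$ appearing in $\gamma_n(a)$ satisfies $p\mid n$, so $\rad(\gamma_n(a))=\prod_{p\mid \gamma_n(a)}p$ divides $n$. Similarly (iii) holds because any prime $p\mid\tau_n(a)$ satisfies $p\nmid n$ by definition, so $\tau_n(a)$ and $n$ share no common prime factor.

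For (iv), I would observe that for $n\neq 0$ the condition $p\mid n$ is equivalent to $p\mid\rad(n)$: a prime divides a nonzero integer if and only if it divides its squarefree kernel. Hence the index sets of the defining products for $\gamma_n(a)$ and $\gamma_{\rad(n)}(a)$ coincide, and likewise for $\tau_n$. Finally (v) follows from (ii) and (iii): a common prime divisor of $\gamma_n(a)$ and $\tau_n(a)$ would simultaneously divide $n$ (by (ii)) and be coprime to $n$ (by (iii)), a contradiction; so $(\gamma_n(a),\tau_n(a))=1$.

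There is no real obstacle here; the only point requiring a small amount of care is the bookkeeping in degenerate cases, where one of the two products is empty (namely $\gamma_n(a)=1$ when $(a,n)=1$ and $\tau_n(a)=1$ when $\rad(a)\mid n$), together with the hypothesis $a\neq 0$ that guarantees the prime factorization of $a$ is well defined. No deeper tools are needed beyond unique factorization.
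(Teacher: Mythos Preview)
Your proof is correct and follows exactly the approach the paper indicates: the paper does not give a detailed proof but states that these properties ``can easily be deduced from their definition and by unique prime factorization,'' which is precisely what you do. Your handling of the five items, including the observation that (v) follows from (ii) and (iii), is clean and complete.
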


The multiplicative property of $\phi$ can be restated and
generalized in terms of $\gamma_n,\tau_n$ as follows.

\begin{lemma}\label{Thm2.3}
  For all $n,m\in\N$ we have
  \begin{enumerate}[(i)]
  \item
    $\phi(nm)=\phi(n)\phi(m)$
    if and only if $n,m$ are coprime, %
  \item
    $\phi(nm)=n\phi(m)$
    if and only if $\rad(n)\mid m$, %
  \item
    $\phi(nm)=\gamma_n(m)\cdot\phi(\tau_n(m))\cdot\phi(n)$.
  \end{enumerate}
  \end{lemma}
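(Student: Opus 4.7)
The plan is to prove all three statements from the standard product formula $\phi(k) = k\prod_{p\mid k}(1-1/p)$, which lets us compare $\phi$-values on products by tracking which primes are ``double counted''.

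For part (i), I would start from the identity
\[
\phi(n)\phi(m) = nm\prod_{p\mid n}\Big(1-\tfrac{1}{p}\Big)\prod_{p\mid m}\Big(1-\tfrac{1}{p}\Big) = \phi(nm)\prod_{p\mid (n,m)}\Big(1-\tfrac{1}{p}\Big),
\]
obtained because every prime dividing both $n$ and $m$ contributes the factor $(1-1/p)$ twice on the left but only once in $\phi(nm)$. Since each factor $(1-1/p)$ is strictly less than $1$, the two sides agree exactly when the product over $p\mid(n,m)$ is empty, i.e.\ when $(n,m)=1$.

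For part (ii), the same bookkeeping gives
\[
\frac{\phi(nm)}{\phi(m)} = \frac{nm}{m}\cdot\frac{\prod_{p\mid nm}(1-1/p)}{\prod_{p\mid m}(1-1/p)} = n\prod_{\substack{p\mid n \\ p\nmid m}}\Big(1-\tfrac{1}{p}\Big),
\]
so $\phi(nm)=n\phi(m)$ iff the residual product is empty, i.e.\ iff every prime dividing $n$ also divides $m$, which is precisely $\rad(n)\mid m$. (One has to check the trivial cases $n=1$ or $m=1$ separately, but these are immediate.)

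For part (iii) I would assemble (i) and (ii) using the decomposition $m=\gamma_{n}(m)\tau_{n}(m)$ from Lemma~\ref{lemma1}(i). By Lemma~\ref{lemma1}(iii) one has $(\tau_{n}(m),n)=1$, and combining with Lemma~\ref{lemma1}(v) gives $(\tau_{n}(m),n\gamma_{n}(m))=1$, so part (i) yields
\[
\phi(nm)=\phi\bigl(n\gamma_{n}(m)\cdot\tau_{n}(m)\bigr)=\phi\bigl(n\gamma_{n}(m)\bigr)\phi(\tau_{n}(m)).
\]
Now Lemma~\ref{lemma1}(ii) says $\rad(\gamma_{n}(m))\mid n$, so part (ii) applied with the roles of $n$ and $m$ reversed (i.e.\ with $\gamma_{n}(m)$ playing the role of ``$n$'' and $n$ the role of ``$m$'') gives $\phi(n\gamma_{n}(m))=\gamma_{n}(m)\phi(n)$, and substituting produces the claimed identity.

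I do not expect any serious obstacle: the main subtlety is only to verify that the hypotheses of (i) and (ii) are met at the right moment, which is exactly what parts (ii), (iii), and (v) of Lemma~\ref{lemma1} are designed to supply. Edge cases ($n$ or $m$ equal to $1$, and the conventions in Definition~\ref{def1.1}) should be checked once at the end to confirm the formulas remain valid.
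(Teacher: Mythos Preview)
Your proposal is correct and follows essentially the same approach as the paper: parts (i) and (ii) are obtained directly from the product formula for $\phi$ (the paper just says they ``follow immediately from the definition of $\phi$ and by unique prime factorization''), and part (iii) is deduced from (i) and (ii) via the decomposition $m=\gamma_n(m)\tau_n(m)$ together with the coprimality and radical facts from Lemma~\ref{lemma1}, exactly as you outline. Your write-up is simply a more detailed version of the paper's terse argument.
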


\begin{proof} The first two parts follow immediately from the
  definition of $\phi$ and by unique prime factorization.
  From Lemma~\ref{lemma1}, %
  we know that $m=\gamma_n(m)\cdot\tau_n(m)$,
  $\rad(\gamma_n(m))\mid n$, and $(\tau_n(m),n\gamma_n(m))=1$.
  Thus, the third part follows immediately from the first parts.
\end{proof}

\begin{rem}In this sense, the condition $\rad(n)\mid m$ can be seen as the
 ``opposite'' 
  of coprimality $(n,m)=1$. Note however that the roles of $n$ and $m$
in $\rad(n)\mid m$
cannot always be swapped like in $(n,m)=1$.
\end{rem}

The following version of Mertens' theorem with primes in
arithmetic progression
is used in the proof of Theorem~\ref{Thm3.1}.

\begin{lemma}\label{Thm2.4}
For coprime $a,b\in\N$ and $x\to\infty$, we have
\[
\prod_{\substack{p\leq x\\p\equiv b\mod a}}\left(1-\frac{1}{p}\right)
\sim\frac{C(a,b)}{\log(x)^{1/\phi(a)}}
\]
for a real constant $C(a,b)>0$.
\end{lemma}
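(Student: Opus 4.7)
My plan is to take logarithms to turn the product into a sum, peel off a convergent quadratic tail, and then invoke the classical Mertens-type theorem for primes in an arithmetic progression.

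First I would write
\[
\log\prod_{\substack{p\leq x\\p\equiv b\mod a}}\Bigl(1-\tfrac{1}{p}\Bigr)
=-\sum_{\substack{p\leq x\\p\equiv b\mod a}}\frac{1}{p}
-\sum_{\substack{p\leq x\\p\equiv b\mod a}}\,\sum_{k\geq 2}\frac{1}{kp^k}.
\]
The inner double sum is bounded by $\sum_p 1/(p(p-1))<\infty$, so as $x\to\infty$ it converges to some finite constant $c_1(a,b)$. The proof therefore reduces to establishing
\[
\sum_{\substack{p\leq x\\p\equiv b\mod a}}\frac{1}{p}
=\frac{\log\log x}{\phi(a)}+M(a,b)+o(1)
\]
for a real constant $M(a,b)$.

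For this arithmetic-progression version of Mertens' second theorem I would use Dirichlet characters: the identity $\mathbf{1}_{p\equiv b\mod a}=\tfrac{1}{\phi(a)}\sum_{\chi\mod a}\overline{\chi(b)}\chi(p)$ splits the sum into a contribution from the principal character $\chi_0$, which equals $\tfrac{1}{\phi(a)}\sum_{p\leq x,\,p\nmid a}1/p=\tfrac{1}{\phi(a)}(\log\log x+M+o(1))$ by the classical Mertens formula, plus contributions $\tfrac{1}{\phi(a)}\overline{\chi(b)}\sum_{p\leq x}\chi(p)/p$ from each non-principal $\chi$. Partial summation from the estimate $\sum_{p\leq x}\chi(p)\log p=O(x)$ together with $L(1,\chi)\neq 0$ shows that each series $\sum_p\chi(p)/p$ converges, so all non-principal contributions are bounded and may be absorbed into $M(a,b)$.

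Combining these steps yields
\[
\log\prod_{\substack{p\leq x\\p\equiv b\mod a}}\Bigl(1-\tfrac{1}{p}\Bigr)=-\frac{\log\log x}{\phi(a)}+C_0(a,b)+o(1),
\]
with $C_0(a,b):=-M(a,b)-c_1(a,b)$. Exponentiating and multiplying through by $\log(x)^{1/\phi(a)}$ gives the claim with $C(a,b):=e^{C_0(a,b)}>0$. The genuinely hard input is the non-vanishing $L(1,\chi)\neq 0$ for non-principal $\chi\mod a$, i.e., the core analytic content of Dirichlet's theorem on primes in arithmetic progressions; everything else reduces to elementary Mertens manipulations and partial summation, and in practice one would simply cite the result from a standard reference such as Tenenbaum's book.
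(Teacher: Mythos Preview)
Your sketch is correct and follows the standard route to Mertens' theorem in arithmetic progressions. However, the paper does not give its own proof of this lemma at all: it simply cites Williams~\cite{arithMertens} for the result. In that sense your proposal goes well beyond what the paper does, supplying the actual analytic argument (logarithm, quadratic tail, character decomposition, non-vanishing of $L(1,\chi)$) that the paper is content to outsource. One small technical remark: the step ``partial summation from $\sum_{p\leq x}\chi(p)\log p=O(x)$'' is too weak as stated, since that bound is trivial; the cleanest way to see that $\sum_p\chi(p)/p$ converges is via $\log L(s,\chi)$ and the analyticity and non-vanishing of $L(s,\chi)$ at $s=1$, which is exactly the input you identify.
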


A proof of Lemma~\ref{Thm2.4} is given in
\cite[Thm.1, pp.353--359]{arithMertens}.
Trivially, it implies Dirichlet's prime number theorem
on the infinity of primes in an arithmetic progression $b$ mod $a$
with $(a,b)=1$.

\medskip
Another handy Lemma that we use in Section~\ref{sec5}
is from Tenenbaum's book \cite[III.5.2,Thm.3,p.366]{Tenenbaum}.
\begin{lemma}
  \label{LemTen}
  For $k\in\N$, $z\in\R_{\geq 0}$, $a_1,\dots,a_k\in\R_{>0}$,
  let 
  $N_{k}(z):=\#\{(\nu_{1},\dots,\nu_{k})\in\N_{0}^{k};\
    \sum_{i=1}^{k}\nu_{i}a_{i}\leq z\}$.  Then
    \[
      \frac{z^{k}}{k!}\prod_{i=1}^{k}\frac{1}{a_{i}}<N_{k}(z)
      <\frac{(z+\sum_{i=1}^{k}a_{i})^{k}}{k!}\prod_{i=1}^{k}\frac{1}{a_{i}}.
    \]
\end{lemma}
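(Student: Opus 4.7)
The plan is to bracket $N_k(z)$ between the volumes of two simplices that differ only by a boundary shift. Let
\[T(w) := \{(x_1,\dots,x_k) \in \R_{\geq 0}^k : x_1 a_1 + \cdots + x_k a_k \leq w\},\]
which by the linear substitution $x_i\mapsto x_i/a_i$ applied to the standard $k$-simplex has Lebesgue volume $\mathrm{vol}(T(w)) = \frac{w^k}{k!}\prod_{i=1}^{k}\frac{1}{a_i}$. To each lattice point $\nu=(\nu_1,\dots,\nu_k)\in\N_0^k$ counted by $N_k(z)$, I associate the half-open unit cube $B_\nu := \prod_{i=1}^k [\nu_i,\nu_i+1)$. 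The cubes are pairwise disjoint and each has volume $1$, so the total Lebesgue measure of their union equals $N_k(z)$.

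For the upper bound, I would check that every such $B_\nu$ is contained in $T(z+\sum_i a_i)$: for $x\in B_\nu$ one has $\sum_i x_i a_i < \sum_i (\nu_i+1)a_i \leq z + \sum_i a_i$. Comparing volumes yields $N_k(z) \leq \frac{(z+\sum_i a_i)^k}{k!}\prod_{i=1}^k\frac{1}{a_i}$. To upgrade to strict inequality, I would exhibit a positive-measure region of $T(z+\sum_i a_i)$ that meets no counted cube, namely a thin slab just inside the hyperplane $\sum_i x_i a_i = z+\sum_i a_i$ whose integer parts violate the counting constraint $\sum_i \nu_i a_i \leq z$.

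For the lower bound, I would show the reverse inclusion $T(z)\subseteq \bigsqcup_\nu B_\nu$: given $x\in T(z)$, set $\nu_i := \lfloor x_i \rfloor$; then $\sum_i \nu_i a_i \leq \sum_i x_i a_i \leq z$, so $\nu$ is counted, and plainly $x\in B_\nu$. The volume comparison then gives $\frac{z^k}{k!}\prod_i\frac{1}{a_i}\leq N_k(z)$. Strict inequality follows by picking any maximal counted lattice point $\nu$, that is one with $\sum_i \nu_i a_i\leq z < \sum_i (\nu_i+1)a_i$ (such a $\nu$ exists since $N_k(z)<\infty$): the corner of $B_\nu$ opposite the origin lies strictly outside $T(z)$, so $B_\nu\setminus T(z)$ has positive measure.

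The main obstacle is the clean derivation of the strict inequalities, which does not fall out of the bare volume comparison and requires locating the precise deficit or surplus region of positive measure. If this geometric bookkeeping becomes awkward, a safe fallback is induction on $k$: the base case $k=1$ is a direct calculation using $N_1(z)=\lfloor z/a_1\rfloor + 1$, and the inductive step uses the decomposition $N_k(z)=\sum_{\nu=0}^{\lfloor z/a_k\rfloor} N_{k-1}(z-\nu a_k)$, bounding the inner sum by comparison to the integral arising from the $(k-1)$-dimensional inductive hypothesis.
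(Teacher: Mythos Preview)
The paper does not prove this lemma at all; it is simply quoted from Tenenbaum's book \cite[III.5.2, Thm.~3]{Tenenbaum}. So there is no in-paper argument to compare against. Your approach---sandwiching the lattice count between the volumes of two dilated simplices by attaching a unit cube $B_\nu$ to each counted lattice point---is the standard one and is essentially what Tenenbaum does. Your derivation of the non-strict inequalities and of the strict lower inequality is correct.

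There is, however, a genuine issue with the strict \emph{upper} inequality as the lemma is stated here: it fails for $k=1$ whenever $z/a_1\in\N_0$. For instance with $k=1$, $a_1=1$, $z=0$ one has $N_1(0)=1$ while the claimed upper bound equals $(0+1)^1/1!\cdot 1=1$; more generally $N_1(ma_1)=m+1=(ma_1+a_1)/a_1$ for every $m\in\N_0$. Hence your ``thin slab'' sketch cannot be completed in full generality, because the assertion you are trying to prove is simply not true in those cases. The correct statement carries $\leq$ on the right (or excludes these boundary values of $z$), and this is harmless for the only use of the lemma in the paper, namely the asymptotic formula in Theorem~\ref{Thm3.11}, where only the size of the main term and an $O(\log(x)^{s-1})$ error are needed.
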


\medskip
Finally, the following well-known elementary property
of the greatest common divisor
is frequently used in our work.
\begin{lemma}
\label{Lem2.5} 
  Let $a,b,c\in\Z$ be integers such that $a\equiv b\mod c$. Then $(a,c)=(b,c)$.
\end{lemma}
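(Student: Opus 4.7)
The plan is to prove equality of $(a,c)$ and $(b,c)$ by showing that the set of common divisors of $a$ and $c$ coincides with the set of common divisors of $b$ and $c$. The starting observation is that $a\equiv b\mod c$ means $a=b+kc$ for some $k\in\Z$, which can equivalently be rewritten as $b=a-kc$.

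Using this, I would argue in two symmetric steps. First, if $d\in\Z$ divides both $a$ and $c$, then $d$ also divides $a-kc=b$, so $d$ is a common divisor of $b$ and $c$. Conversely, if $d$ divides both $b$ and $c$, then $d$ divides $b+kc=a$, so $d$ is a common divisor of $a$ and $c$. The two sets of common divisors thus coincide, and in particular so do their greatest elements, giving $(a,c)=(b,c)$.

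The only subtle point is the edge case $c=0$: then $a\equiv b\mod 0$ forces $a=b$, and the identity $(a,0)=(b,0)$ is immediate and consistent with the convention $\gcd(0,0):=0$ stated in the notation section. There is no genuine obstacle here; the lemma is a standard and elementary property of $\gcd$, included in Section~\ref{sec2} purely for convenient reference in later arguments involving the prime-power parts $\gamma_n,\tau_n$ and the radical.
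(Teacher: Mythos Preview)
Your argument is correct: writing $a=b+kc$ and observing that any common divisor of $b$ and $c$ divides $a$, and vice versa, is the standard proof, and your handling of the degenerate case $c=0$ is fine. Note that the paper itself does not give a proof of this lemma at all---it simply records it as a ``well-known elementary property of the greatest common divisor'' for later reference---so there is no approach to compare against; your write-up would serve perfectly well as the omitted justification.
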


%----------------------------------------------------------

\section{The function $\frac{\phi(an+b)}{cn+d}$}
\label{sec3}%Section 3

In this section, we present a proof of
Theorem~\ref{Thm3.1} in Section~\ref{sec1}.

\begin{proof}[Proof of Theorem~\ref{Thm3.1}]
The inclusion $L\subseteq\ ]0,D]$ is assured by the
definition of $M$. Let $x\in\ ]0,D[$ be an arbitrary 
interval point and put $A:=x^{\phi(a)}$ and $B:=D^{\phi(a)}$. Then $0<A<B$. 

Let $a':=a/(a,b)$ and $b':=b/(a,b)$, so that $(a',b')=1$.
Let $(p_k^*)_{k\in\N}$ be the increasing sequence of all primes such
that $p_k^*\equiv b'\mod a'$, of which there are infinitely many by
Dirichlet's prime number theorem, and let $(r_k)_{k\in\N}$ be the
increasing sequence of integers such that $p_k^*=a'r_k+b'$.
Then $(a,b)p_k^*=ar_k+b$. (Note that if  $b=0$,
then $a'=1$ and $p_k^*$ is the sequence of all primes.)

Now define
\[
m_k:=(a,b)p_{f(k)}^*\prod_{\substack{e^{Ak}<p\leq e^{Bk}\\p\equiv 1\mod a}} p
\]
for a monotonically increasing function $f:\N\to\N$ 
such that $p_{f(k)}^*>e^{Bk}$ for all $k\in\N$.
Since $(a,b)p_{f(k)}^*=ar_{f(k)}+b$,
we have $m_k\equiv b\mod a$, which means that %%%
there is an $n_k\in\Z$ such that
$m_k=an_k+b$. Since $p_{f(k)}^*\to\infty$ for $k\to\infty$, also
$m_k\to\infty$ and $n_k\to\infty$. Hence 
we have $n_k\in\N$ and $n_k>\max\{-b/a,-d/c\}$ for sufficiently large $k$. 
Let $k$ also be sufficiently large such that $p\leq e^{Ak}$
for all $p\mid(a,b)t$. Then $(t,m_k/(a,b))=1$
and the numbers $(a,b)$ and $p_{f(k)}^*$ and
$\displaystyle\prod_{\substack{e^{Ak}<p\leq e^{Bk}\\p\equiv 1\mod a}} p$ are pairwise
coprime. 

Since $1-\frac{1}{p_{f(k)}^*}\to1$ for $k\to\infty$,
Lemma~\ref{Thm2.4} yields
\begin{align*}
\frac{\phi(m_k)}{m_k}
&=\frac{\phi((a,b))}{(a,b)}\cdot
\left(1-\frac{1}{p_{f(k)}^*}\right)\cdot
\prod_{\substack{e^{Ak}<p\leq e^{Bk}\\p\equiv 1\mod a}}\left(1-\frac{1}{p}\right)\\
&\sim\frac{\phi((a,b))}{(a,b)}\cdot
\left(\frac{\log(e^{Ak})}{\log(e^{Bk})}\right)^{1/\phi(a)}\\
&=\frac{\phi((a,b))}{(a,b)}\cdot
\left(\frac{A}{B}\right)^{1/\phi(a)}
=\frac{\phi((a,b))}{(a,b)}\cdot
\frac{x}{D}.
\end{align*}
From $n_k\to\infty$ for $k\to\infty$ and $m_k=an_k+b$, we obtain
\begin{align*}
\frac{\phi(an_k+b)}{cn_k+d}
&=\frac{\phi(m_k)}{m_k}\cdot\frac{an_k+b}{cn_k+d}
=\frac{\phi(m_k)}{m_k}\cdot\frac{a}{c}\cdot\frac{n_k+b/a}{n_k+d/c}\\
&\sim\frac{\phi((a,b))}{(a,b)}\cdot
\frac{x}{D}\cdot
\frac{a}{c}
=D\cdot
\frac{x}{D}=x.
\end{align*}
Since $x<D$, we deduce $n_k\in M$ for sufficiently large $k$,
thus $x\in\ ]0,D[$ is the limit of a sequence from $M$. Obviously, any
of the properties (1),(2),(3) is true for $n_k$, which gives the statement.
\end{proof}

Theorem~\ref{Thm3.1} being stated with fairly general
constants $a,b,c,d$ can also be used
to deduce immediately a version with
$n$ in $L$ being restricted to an arithmetic progression $r$ mod $m$.

\begin{cor}\label{Cor3.3}
  Let $a,m\in\N$, $b,r\in\Z$, $c\in\R_{>0}$, $d\in\R$ be constants. Put
\[D:=\frac{a}{c}\cdot\frac{\phi((am,ar+b))}{(am,ar+b)}\]
and
\[M:=\left\{ n\in\N;\ n> \max\Big\{-\frac{b}{a},-\frac{d}{c}\Big\}
\text{ and } \frac{\phi(an+b)}{cn+d}\leq D\right\}.\]
Then the set
\[L:=\left\{
\frac{\phi(an+b)}{cn+d};\ 
n\in M,\ n\equiv r\mod m
\right\}\]
is dense in $]0,D]$.
    Furthermore, for arbitrary $t\in\N$, we can restrict $L$ by any of the
    following conditions on $n$ such that $L$ remains dense in $]0,D]$.
\begin{enumerate}%
\item $\frac{an+b}{(am,ar+b)}$ is squarefree,
\item $\frac{an+b}{(am,ar+b)}$ and $t$ are coprime,
\item there is at most one prime factor $p\mid\frac{an+b}{(am,ar+b)}$
  such that $p\not\equiv1\mod a$, and if it exists, it has
  multiplicity 1 and $a>1$.
\end{enumerate}
\end{cor}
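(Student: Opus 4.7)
The plan is to reduce Corollary~\ref{Cor3.3} to Theorem~\ref{Thm3.1} via the linear substitution $n = mk + r$. For $k$ large enough that $n > \max\{-b/a,-d/c\}$, one simply computes
\[
an + b \;=\; (am)\,k + (ar + b), \qquad cn + d \;=\; (cm)\,k + (cr + d),
\]
so the fraction $\phi(an+b)/(cn+d)$ takes exactly the shape $\phi(Ak+B)/(Ck+E)$ treated in Theorem~\ref{Thm3.1}, with the new parameters $A := am \in \N$, $B := ar+b \in \Z$, $C := cm \in \R_{>0}$, $E := cr+d \in \R$. The restriction $n \equiv r \pmod{m}$ is automatic from the substitution, so $n$ ranging over an arithmetic progression corresponds to $k$ ranging over $\N$.

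The next step is to apply Theorem~\ref{Thm3.1} directly to the tuple $(A,B,C,E)$. The resulting maximal density interval is $\,]0,D']$ with
\[
D' \;=\; \frac{A\,\phi((A,B))}{C\,(A,B)} \;=\; \frac{am\cdot \phi((am,\,ar+b))}{cm\cdot (am,\,ar+b)} \;=\; \frac{a\,\phi((am,\,ar+b))}{c\,(am,\,ar+b)},
\]
which is precisely the $D$ in the corollary's statement; and the bound $k > \max\{-B/A,-E/C\}$ translates, after multiplying by $m$ and adding $r$, to $n > \max\{-b/a,-d/c\}$. Density of $L$ in $\,]0,D]$ then drops out of Theorem~\ref{Thm3.1} at once.

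The only genuinely non-automatic step in the plan will be verifying that the three optional restrictions transfer correctly. Conditions~(1) and~(2) are literally the corresponding conditions of Theorem~\ref{Thm3.1} applied to $(A,B)$, since $(A,B) = (am,\,ar+b)$ and $Ak+B = an+b$. The subtle point is condition~(3): Theorem~\ref{Thm3.1} provides density under the \emph{stronger} requirement that the exceptional prime factor be $\not\equiv 1 \pmod{am}$, while the corollary only demands $\not\equiv 1 \pmod{a}$. The key observation is that $p \equiv 1 \pmod{am}$ forces $p \equiv 1 \pmod{a}$, so the set of admissible $n$ under Theorem~\ref{Thm3.1}(3) is \emph{contained} in the one under the corollary's~(3); density of the smaller set therefore forces density of the larger. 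The auxiliary hypothesis ``$A > 1$'' in Theorem~\ref{Thm3.1}(3) reduces to the stated ``$a > 1$'' when $m = 1$ and is automatic for $m \geq 2$, which accounts for the wording of the corollary. Apart from this reconciliation, the argument is a pure change of variables and no further obstacle is expected.
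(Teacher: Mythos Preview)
Your proposal is correct and follows essentially the same route as the paper: both apply Theorem~\ref{Thm3.1} to the transformed parameters $(A,B,C,E)=(am,\,ar+b,\,cm,\,cr+d)$ and recover $n$ via $n=mk+r$. You are in fact more explicit than the paper about the one nontrivial point, namely that condition~(3) of Theorem~\ref{Thm3.1} (stated modulo $am$) is a genuinely stronger restriction than the corollary's condition~(3) (stated modulo $a$), so density passes from the smaller set to the larger; the paper simply asserts that the three conditions ``follow directly'' without isolating this inclusion. Your remark on the clause ``$a>1$'' is slightly roundabout---note that if some prime $p$ satisfies $p\not\equiv 1\pmod a$ then necessarily $a>1$, so that clause is automatic whenever it is invoked---but your conclusion is correct.
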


\begin{proof} The inclusion $L\subseteq\ ]0,D]$ is assured by the definition of
      $M$. Let $x\in\ ]0,D[$ be an interval point
          and let $(s_k)_{k\in\N}\subseteq\N$ be a
          sequence such that
\[
\lim_{k\to\infty}\frac{\phi(ams_k+ar+b)}{cms_k+cr+d}=x,
\]
which exists due to Theorem~\ref{Thm3.1}.

Put $n_k:=ms_k+r$. Then $n_k\equiv r\mod m$ and
\[
 \lim_{k\to\infty}\frac{\phi(an_k+b)}{cn_k+d}
=\lim_{k\to\infty}\frac{\phi(ams_k+ar+b)}{cms_k+cr+d}
=x.
\]
Since $0<x<D$, it follows that $n_k\in M$ for all sufficiently large $k$.
Furthermore,
\[
\frac{an_k+b}{(am,ar+b)}
=\frac{ams_k+ar+b}{(am,ar+b)},
\]
and the three additional conditions on $\frac{an_k+b}{(am,ar+b)}$
follow directly from the choice of $s_k$ from Theorem~\ref{Thm3.1}.
\end{proof}

\begin{rem}\label{note1}
For all $\theta\in\Z$, we have
\[
a(m\theta+r)+b
=am\theta+ar+b
\equiv ar+b\mod am,
\]
thus by Lemma~\ref{Lem2.5}, % 
the term $(am,a(m\theta+r)+b)=(am,ar+b)$
appearing in the denominator of Corollary~\ref{Cor3.3}(1),(2),(3)
is independent of the choice
of the representative $r$ modulo $m$.
Thus adding some of these restrictions to $L$ is \emph{not} depending
on this representative.
\end{rem}

\begin{rem}\label{note2}
Since $ar+b\equiv b\mod a$, Lemma~\ref{Lem2.5} for $m=1$ yields 
\[(am,ar+b)=(a,ar+b)=(a,b).\] 
Hence, Theorem~\ref{Thm3.1} is the special case of
Corollary~\ref{Cor3.3} with $m=1$.
\end{rem}

\begin{rem}\label{note3}
  If $r=0$, the congruence $n\equiv r\mod m$ is equivalent to $m\mid n$,
  which means that an arbitrary divisor $m$ of $n$ is considered.
  If $r=1$, the congruence $n\equiv r\mod m$ implies that
  $n$ and $m$ are coprime, which rules out a finite set of
  divisors of $n$.
  It is also possible to combine these two cases:
  Let $m_1,m_2\in\N$ be coprime, then 
  there exist integers $x,y\in\Z$ such that $xm_1+ym_2=1$
  by B\'{e}zout's identity. Now assume that
  \[n\equiv xm_1\mod m_1m_2.\]
  
  Then $n\equiv0\mod m_1$ and $n\equiv1\mod m_2$, which means
  $m_1\mid n$ and $(m_2,n)=1$. More generally, since the
  Chinese remainder theorem gives a tool to combine a finite set
  of congruence relations into a single one, this can then be fed
  into Theorem~\ref{Thm3.1} to gain a generalization to systems
  of congruences. We do not give the details.
\end{rem}

\begin{rem}\label{Note4} For $c>0$, the condition $n>-d/c$ in $M$ rules
  out exactly those cases with $cn+d\leq0$, i.e.\ the negative values
  of the fraction $\phi(an+b)/(cn+d)$. Since $n\in\N$,
  there are only finitely many values of this fraction
  in the interval $]-\infty,0]$. However, the same is not true for
  the interval $]D,\infty[$. If we let $b=0$, $a\geq2$ , $d<0$ and choose $n$
  such that $\rad(n)\mid a$ and $n>-d/c$, of which there obviously are
  infinitely many, then Lemma~\ref{Thm2.3}(ii) yields
\[
\frac{\phi(an)}{cn+d}
=\frac{n\phi(a)}{cn+d}
=\frac{\phi(a)}{c+d/n}
>\frac{\phi(a)}{c}=D.
\]
However, since the left hand side converges to $D$ for $n\to\infty$,
such $n$ with $\rad(n)\mid a$ generate only finitely many values
of $\phi(an)/(cn+d)$ in the interval $]D+\epsilon,\infty[$ for any
$\epsilon>0$. It could still be possible that
some $x>D$ is a limit point of the set of all fractions.
But this is not true due to Corollary~\ref{Thm3.4}.
\end{rem}

\begin{cor}\label{Thm3.4}
For all constants $a\in\N$, $b\in\Z$, $c\in\R_{>0}$, $d\in\R$,
\[
\limsup_{n\to\infty}\frac{\phi(an+b)}{cn+d}
=\frac{a\phi((a,b))}{c(a,b)}
\]
and
\[
\liminf_{n\to\infty}\frac{\phi(an+b)}{cn+d}
=0.
\]
\end{cor}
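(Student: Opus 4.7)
My plan is to derive the corollary from Theorem~\ref{Thm3.1} together with one elementary observation. First, I would record the upper bound $\phi(m)/m \leq \phi((a,b))/(a,b)$ whenever $(a,b)\mid m$: this is immediate from $\phi(m)/m = \prod_{p\mid m}(1-1/p)$, since the primes dividing $(a,b)$ form a subset of those dividing $m$, and each extra factor $(1-1/p)$ is at most $1$. Applied to $m=an+b$ (which is always divisible by $(a,b)$), this yields
\[
\frac{\phi(an+b)}{cn+d} \;=\; \frac{\phi(an+b)}{an+b}\cdot\frac{an+b}{cn+d} \;\leq\; \frac{\phi((a,b))}{(a,b)}\cdot\frac{an+b}{cn+d}.
\]
Since $(an+b)/(cn+d)\to a/c$ as $n\to\infty$, this would give
\[
\limsup_{n\to\infty}\frac{\phi(an+b)}{cn+d}\;\leq\;\frac{\phi((a,b))}{(a,b)}\cdot\frac{a}{c}\;=\;D.
\]

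For the matching lower bound of the $\limsup$, I would invoke density of $L$ in $]0,D]$ from Theorem~\ref{Thm3.1}: this puts $D$ in the closure of $L$, so there is a sequence of values in $L$ tending to $D$. Because any bounded set of indices $n$ produces only finitely many values of the fraction, such a convergent sequence must be realized by an unbounded sequence of indices; extracting $n_k\to\infty$ then gives $\limsup\geq D$, hence equality. For the $\liminf$, density also forces $0$ to be a limit point of $L$, and the same extraction produces $n_k\to\infty$ with $\phi(an_k+b)/(cn_k+d)\to 0$, so $\liminf\leq 0$. Combined with positivity of the fractions (ensured for $n>\max\{-b/a,-d/c\}$), this gives $\liminf=0$.

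The only point to be careful about is that the $\limsup$ and $\liminf$ are taken over all sufficiently large $n\in\N$, not only over $n\in M$; the deterministic bound $\phi(an+b)/(an+b)\leq\phi((a,b))/(a,b)$ is precisely what controls the fractions on the complement of $M$ where they may temporarily exceed $D$, as illustrated in Remark~\ref{Note4}. Apart from this bookkeeping I do not anticipate a real obstacle, since all the arithmetic content is already packaged inside Theorem~\ref{Thm3.1}.
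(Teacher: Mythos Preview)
Your proposal is correct and follows essentially the same approach as the paper: both use Theorem~\ref{Thm3.1} to obtain $D$ and $0$ as limit points, and both use the pointwise bound $\phi(an+b)/(an+b)\leq\phi(g)/g$ (with $g=(a,b)$) together with $(an+b)/(cn+d)\to a/c$ to cap the $\limsup$. The only cosmetic difference is that the paper derives this bound via Lemma~\ref{Thm2.3}(iii) and the $\gamma_g,\tau_g$ machinery, whereas you read it off directly from the product $\phi(m)/m=\prod_{p\mid m}(1-1/p)$; your route is slightly more elementary but the content is identical.
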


\begin{proof}
  The assertion on the $\liminf$ is clear by Theorem~\ref{Thm3.1},
  since the fraction $\frac{\phi(an+b)}{cn+d}$ is negative
  only finitely many times, namely if $n<-d/c$.

Let $g:=(a,b)$. Theorem~\ref{Thm3.1} then yields
\[\limsup_{n\to\infty}\frac{\phi(an+b)}{an+b}\geq\frac{\phi(g)}{g},\]
since the sequence of fractions has $D=\phi(g)/g$
as a limit point.

Let $r_n:=na/g+b/g$. Then $r_ng=an+b$, and thus by Lemma~\ref{Thm2.3}(iii),
\[
\frac{\phi(an+b)}{an+b}
=\frac{\gamma_g(r_n)\phi(\tau_g(r_n))\phi(g)}{r_ng}
=\frac{\phi(\tau_g(r_n))}{\tau_g(r_n)}\cdot\frac{\phi(g)}{g}
\leq\frac{\phi(g)}{g},
\]
hence
\[\limsup_{n\to\infty}\frac{\phi(an+b)}{an+b}=\frac{\phi((a,b))}{(a,b)}.\]
This implies
\[
\limsup_{n\to\infty}\frac{\phi(an+b)}{cn+d}
=\limsup_{n\to\infty}\frac{\phi(an+b)}{an+b}\cdot\frac{an+b}{cn+d}
=\frac{\phi((a,b))}{(a,b)}\cdot\frac{a}{c}.
\]\end{proof}

\begin{rem}
Note that for $c<0$ we obtain immediately 
\[
\limsup_{n\to\infty}\frac{\phi(an+b)}{cn+d}
=-\liminf_{n\to\infty}\frac{\phi(an+b)}{-cn-d}
=0
\]
and
\[
\liminf_{n\to\infty}\frac{\phi(an+b)}{cn+d}
=-\limsup_{n\to\infty}\frac{\phi(an+b)}{-cn-d}
=\frac{a\phi((a,b))}{c(a,b)}.
\]
\end{rem}

%-----------------------------------------------------------------------
%Section 4:
\section{The fraction $\frac{\phi(an+b)}{\phi(cn+d)}$}
\label{sec4}

In this section we investigate the fraction $\phi(an+b)/\phi(cn+d)$
for all constants $a,c\in\N$, $b,d\in\Z$. Clearly, it takes only
positive values,
and we prove here our next main result, Theorem~\ref{Thm4.2},
which states that the set of fractions with $ad\neq bc$ 
is dense in the positive real numbers, whereas for $ad=bc$, there
are exactly $2^{\omega(\tau_c(a)\tau_a(c))}$ such fractions.

\medskip
We start the proof by treating first the case $a=c=1,b\neq d$, with $n$ in an arithmetic progression.

\begin{lemma}\label{Lem4.1}
For all $m\in\N$ and $b,d,r\in\Z$ such that $b\neq d$, the set
\[\left\{
\frac{\phi(n+b)}{\phi(n+d)};\ 
n\in\N,\ n>\max\{-b,-d\},\ n\equiv r\mod m
\right\}\]
is dense in $]0,\infty[$.
\end{lemma}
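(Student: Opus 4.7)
The strategy will be to adapt the construction of Theorem~\ref{Thm3.1}, this time engineering the prime factorizations of both $n+b$ and $n+d$ simultaneously. Set $e:=b-d\neq 0$. Since $n+b=(n+d)+e$, for every prime $p\nmid e$ at most one of $n+b,n+d$ is divisible by $p$, while primes $p\mid e$ divide both or neither, contributing identical factors to $\phi(n+b)/(n+b)$ and $\phi(n+d)/(n+d)$ that cancel in the fraction $\phi(n+b)/\phi(n+d)$. Writing $g_1:=(r+d,m)$ and $g_2:=(r+b,m)$, Lemma~\ref{Lem2.5} gives $(n+d,m)=g_1$ and $(n+b,m)=g_2$ for all $n\equiv r\mod m$, so the prime divisors of $m$ yield fixed contributions $F_1:=\phi(g_1)/g_1$ to $\phi(n+d)/(n+d)$ and $F_2:=\phi(g_2)/g_2$ to $\phi(n+b)/(n+b)$.

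Fix $x\in\,]0,\infty[\,$ and $\epsilon>0$. The plan is to choose disjoint finite sets $S,S'$ of primes coprime to $em$ so that
\[
\frac{F_2}{F_1}\cdot\frac{\prod_{p\in S'}(1-1/p)}{\prod_{p\in S}(1-1/p)}
\]
approximates $x$ within $\epsilon$, and then construct $n$ whose small prime divisors of $n+d$ are exactly $\{p\mid g_1\}\cup S$ and whose small prime divisors of $n+b$ are exactly $\{p\mid g_2\}\cup S'$. The set of values $\phi(A)/A$ over squarefree $A$ coprime to $em$ is dense in $\,]0,1]$ by the $a=c=1$, $b=d=0$ case of Theorem~\ref{Thm3.1} with restriction (2) applied to $t=em$; hence the ratio $\prod_{S'}/\prod_{S}$ sweeps out a dense subset of $\,]0,\infty[\,$ as $S,S'$ vary, and the target value $xF_1/F_2$ can be matched arbitrarily well. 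For a parameter $T\to\infty$ I impose the following congruences on $n$ via CRT: $n\equiv r\mod m$; $n\equiv -d\mod p$ for each $p\in S$; $n\equiv -b\mod p$ for each $p\in S'$; $n\not\equiv -b,-d\mod p$ for each prime $p\le T$ with $p\nmid em$ and $p\notin S\cup S'$; and $n\not\equiv -d\mod p$ for each prime $p\mid e$ with $p\nmid m$ and $p\le T$. This system is solvable modulo $M_T:=m\prod_{p\le T,\,p\nmid m}p$, the sole obstruction being parity when $2\nmid em$ (then the two forbidden residues $-b,-d$ exhaust $\Z/2\Z$), which is cured by forcing $2\in S$ or $2\in S'$; either choice still allows every target $x$ since $\prod_{S'}/\prod_{S}$ remains unrestricted. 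Let $n_T$ be the least admissible $n>\max\{-b,-d\}$; then $n_T=O(M_T)$ and $\log n_T=O(T)$ by the prime number theorem.

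By the CRT constraints, every prime factor of $n_T+d$ outside $\{p\mid g_1\}\cup S$ exceeds $T$; such primes number at most $\log n_T/\log T=O(T/\log T)$, each contributing a factor in $[1-1/T,1)$, so their combined contribution to $\phi(n_T+d)/(n_T+d)$ is at least $(1-1/T)^{O(T/\log T)}\to 1$. The analogous estimate holds for $n_T+b$ with $S'$ and $g_2$. Consequently $\phi(n_T+d)/(n_T+d)\to F_1\prod_{p\in S}(1-1/p)$ and $\phi(n_T+b)/(n_T+b)\to F_2\prod_{p\in S'}(1-1/p)$, and combined with $(n_T+b)/(n_T+d)\to 1$ this yields $\phi(n_T+b)/\phi(n_T+d)\to (F_2/F_1)\prod_{S'}/\prod_{S}$, within $\epsilon$ of $x$. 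Letting $\epsilon\to 0$ via a diagonal extraction produces a sequence $(n_k)$ with $n_k\equiv r\mod m$ and $\phi(n_k+b)/\phi(n_k+d)\to x$, proving density in $\,]0,\infty[\,$. The main technical difficulty will be rigorously verifying the ``large-prime tail tends to $1$'' estimate, which rests on the PNT bound $\log n_T=O(T)$, together with clean bookkeeping of the CRT compatibility conditions in the parity-obstruction case $2\nmid em$.
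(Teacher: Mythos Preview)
Your approach is essentially correct and takes a genuinely different route from the paper's. The paper follows Schinzel's construction: it builds \emph{growing} products $A_k=p_{h+1}\cdots p_{h+k}$ and $B_k=p_{h+k+1}\cdots p_{h+k+\ell(k)}$ of consecutive primes with $\ell(k)$ tuned so that $\frac{\phi(B_k)/B_k}{\phi(A_k)/A_k}\to z'$, applies CRT with only the three moduli $F,A_k,B_k$, and then in a somewhat delicate Step~3 controls the uncontrolled small and large prime factors of $n_k+b,n_k+d$ (the $\gamma_F$-terms and the $u_i,v_i$). You instead \emph{freeze} the target sets $S,S'$, use CRT with \emph{all} primes up to $T$ to force out every unwanted small prime explicitly, and kill the large-prime tail by the clean estimate $(1-1/T)^{O(T/\log T)}\to1$. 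Separating the approximation step (choice of $S,S'$) from the tail-control step ($T\to\infty$) makes the argument more transparent and removes the $\gamma_F$ bookkeeping; the price is a much larger CRT modulus $M_T$. Your handling of the parity obstruction when $2\nmid em$ is also neater than the paper's device of absorbing small primes into $F$.

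There is one small gap. You take $n_T$ to be the \emph{least} admissible $n>\max\{-b,-d\}$ and then invoke $(n_T+b)/(n_T+d)\to1$, which needs $n_T\to\infty$. That is not automatic: if some fixed $n_0$ happens to have $n_0+d$ supported on $S\cup\{p\mid g_1\}$ and $n_0+b$ supported on $S'\cup\{p\mid g_2\}$ (with the $e$-primes avoided), then $n_0$ remains admissible for every $T$ and $n_T=n_0$ forever. The fix is immediate: take instead the least admissible $n>T$ (or $n\in(M_T,2M_T]$); then $n_T\to\infty$ while the crucial bound $\log n_T=O(T)$ is preserved, and the rest of your argument goes through unchanged.
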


The following proof is adapted and expanded from Schinzel's work
\cite[Thm.~1, pp.~123--128]{schinzel}.

\begin{proof}
  Let $z>0$ be a positive real number which we consider as an
  interval point of $]0,\infty[$. It suffices to construct %
a sequence $(n_k)_{k\in\N}\subseteq\N$ such that $n_k\equiv r\mod m$
and $n_k>\max\{-b,-d\}$ for sufficiently large $k$ and
\[
\lim_{k\to\infty}\frac{\phi(n_k+b)}{\phi(n_k+d)}=z.
\]
We assume without loss of generality that $r+b>0$ and $r+d>0$,
since otherwise we replace $r$ by $r+\theta m$ for any sufficiently
large $\theta\in\N$. %
Let
\begin{equation}\label{eq1}
  z':=\frac{\phi(r+d)/(r+d)}{\phi(r+b)/(r+b)}\cdot z>0,
\end{equation}
and let $h\in\N_0$ be such that $p_h$ is the largest prime factor of
\[m\cdot|b-d|\cdot(r+b)(r+d)>0,\]
respectively let $h=0$ if this integer equals $1$.
Thus we define
\[
F:=m\cdot|b-d|\cdot(r+b)(r+d)\cdot p_1\cdots p_h>0,
\]
and note that $\rad(F)=p_1\cdots p_h$.

\bigskip
\textsc{Step 1}: \emph{Construct two sequences $A_k,B_k\in\N$ 
  with $(A_k,B_k)=1$ and a simple prime factorization such that}
\[
\lim_{k\to\infty}\frac{\phi(B_k)/B_k}{\phi(A_k)/A_k}=z'.
\]
Let $k\in\N$ and put $A_k:=p_{h+1}\cdots p_{h+k}$. Then Mertens' theorem,
here Lemma~\ref{Thm2.4} with $a=1$, yields
\begin{equation}\label{eq2}
\lim_{k\to\infty}\frac{\phi(A_k)}{A_k}
=\lim_{k\to\infty}\prod_{i=h+1}^{h+k}\frac{p_i-1}{p_i}=0.
\end{equation}
Since $\prod_{i=h+k+1}^\infty\frac{p_i-1}{p_i}=0$, there is a minimal
$\ell(k)\in\N_0$ %
such that
\[
\prod_{i=h+k+1}^{h+k+\ell(k)}\frac{p_i-1}{p_i}
<z'\cdot\frac{\phi(A_k)}{A_k}\in\ ]0,\infty[.
\]
Using this, put $B_k:=p_{h+k+1}\cdots p_{h+k+\ell(k)}$
if $\ell(k)\geq 1$,
and $B_k:=1$ if $\ell(k)=0$.

Then $F,A_k,B_k$ are
pairwise coprime and $\rad(F)A_kB_k=p_1\cdots p_{h+k+\ell(k)}$. Furthermore,
\begin{equation}\label{eq3}
  \frac{\phi(B_k)/B_k}{\phi(A_k)/A_k}<z'.\end{equation}

If $\ell(k)=0$, then it follows from $B_k=1$ and \eqref{eq3} that
\[
   \frac{1}{\phi(A_k)/A_k}<z',
\]
but the denominator converges to 0 by \eqref{eq2}, hence there is
an $N\in\N$ such that for all $k\geq N$ we have $\ell(k)>0$.

From now on
we assume that $k\geq N$ %
with $B_k\geq2$. Then the minimality of $\ell(k)\geq 1$ gives us
\[
\frac{p_{h+k+\ell(k)}}{p_{h+k+\ell(k)}-1}
\cdot\frac{\phi(B_k)}{B_k}
=\prod_{i=h+k+1}^{h+k+\ell(k)-1}\frac{p_i-1}{p_i}
\geq z'\cdot\frac{\phi(A_k)}{A_k},
\]
hence
\begin{equation}\label{eq4}
  \frac{p_{h+k+\ell(k)}}{p_{h+k+\ell(k)}-1}
  \cdot\frac{\phi(B_k)/B_k}{\phi(A_k)/A_k}\geq z'.
\end{equation}
Since
\[
\lim_{k\to\infty}\frac{p_{h+k+\ell(k)}}{p_{h+k+\ell(k)}-1}
=\lim_{k\to\infty}\left(1-\frac{1}{p_{h+k+\ell(k)}}\right)^{-1}=1,
\]
it follows from \eqref{eq3} and \eqref{eq4}, that
\begin{equation}\label{eq5}
  \lim_{k\to\infty}\frac{\phi(B_k)/B_k}{\phi(A_k)/A_k}=z'.
\end{equation}

Note that for $k\to\infty$ we have $A_k\geq p_{h+k}\to\infty$
and $B_k\geq p_{h+k+\ell(k)}\to\infty$, since $\ell(k)\geq1$.

\bigskip
\textsc{Step 2}: \emph{Construct a sequence $n_k\in\N$ such that
  $n_k\equiv r\mod m$ and numbers $x_k,y_k\in\N$ such that
  $n_k+d=A_ky_k$ and $n_k+b=B_kx_k$ for sufficiently large $k$.}

Since $F,A_k$ and $B_k$ are pairwise coprime by definition,
it follows from
the Chinese remainder theorem that there exists an $n_k\in\N$ such that
$FA_kB_k<n_k\leq 2FA_kB_k$ and
\begin{align}
  n_k&\equiv  r \mod F,%\tag{6}
  \label{eq6} \\
n_k&\equiv -d \mod A_k,\notag\\
n_k&\equiv -b \mod B_k.\notag
\end{align}
By the first congruence \eqref{eq6}, there is an integer
$\sigma_k\in\Z$ such that $n_k=\sigma_kF+r$. Since $m\mid F$,
it follows that $n_k\equiv r\mod m$.
Furthermore, there are integers $x_k,y_k\in\Z$ such that
\begin{equation}\label{eq7}
  n_k+d=A_ky_k,\ n_k+b=B_kx_k.
\end{equation}
Let $k$ be sufficiently large such that $FA_kB_k>\max\{-b,-d\}$,
thus $n_k>\max\{-b,-d\}$, which means $x_k,y_k\in\N$.

\bigskip
\textsc{Step 3}: \emph{Show that $x_k$ and $y_k$ disappear
  asymptotically if inserted into \eqref{eq5}, %
  i.e. show that}
\[
\frac{\phi(B_kx_k)/(B_kx_k)}{\phi(A_ky_k)/(A_ky_k)}
\sim
\frac{\phi(\gamma_F(x_k))/\gamma_F(x_k)}
          {\phi(\gamma_F(y_k))/\gamma_F(y_k)}\cdot
\frac{\phi(B_k)/B_k}{\phi(A_k)/A_k},
\] 
\emph{where the independence from $x_k,y_k$ of this fraction
  follows from} \textsc{Step~4}.

Assume $y_k>A_kB_k$. Then
\[
A_k^2B_k<A_ky_k=n_k+d\leq 2FA_kB_k+d,
\]
hence
\[
A_kB_k(A_k-2F)<d.
\]
Thus
let $k$ be large enough such that $A_kB_k(A_k-2F)\geq d$, so
that $y_k\leq A_kB_k$.

Analogously, 
let $k$ be large enough such that $x_k\leq A_kB_k$.

Note that
\[
A_ky_k-B_kx_k=d-b.
\]
If there was a prime $p$ such that $p\mid(x_k,A_k)$ or $p\mid(y_k,B_k)$,
then $p\mid d-b$, which would contradict $(F,A_k)=(F,B_k)=1$
since $d-b\mid F$. Hence
\[
(x_k,A_k)=(y_k,B_k)=1.
\]
Now let $p$ be a prime such that $p\mid y_k$ and $p\nmid A_k$.
Since $(y_k,B_k)=1$, it follows that $p>p_{h+k+\ell(k)}$ or $p\leq p_h$.
For $n_k+d$ we deduce the factorization
\begin{equation}\label{eq8}
n_k+d=A_ky_k
=\gamma_F(y_k)
\cdot p_{h+1}^{\alpha_1}\cdots p_{h+k}^{\alpha_k}
\cdot u_1^{\beta_1}\cdots u_{s(k)}^{\beta_{s(k)}} %\tag{8} 
\end{equation}
with $s(k)\in\N_0$, with
$\alpha_1,\dots,\alpha_k,\beta_1,\dots,\beta_{s(k)}\in\N$ and
primes $u_1,\dots,u_{s(k)}$ such that
\begin{equation}\label{eq9}
  p_{h+k+\ell(k)}<u_1<\cdots<u_{s(k)}.
\end{equation}
Note that a factorization of $y_k$ is given by
\[
y_k=\gamma_F(y_k)
\cdot p_{h+1}^{\alpha_1-1}\cdots p_{h+k}^{\alpha_k-1}
\cdot u_1^{\beta_1}\cdots u_{s(k)}^{\beta_{s(k)}}.
\]
%------------------------
Now assume that $s(k)\geq k+\ell(k)$. Then by \eqref{eq9},
\[
y_k
\geq u_1\cdots u_{s(k)}
>p_{h+k+\ell(k)}^{s(k)}
\geq p_{h+k+\ell(k)}^{k+\ell(k)}
\geq A_kB_k,
\]
which contradicts $y_k\leq A_kB_k$, hence $s(k)<k+\ell(k)$.

Furthermore, $u_i\geq p_{h+k+\ell(k)}+i$ for all $1\leq i\leq s(k)$
by \eqref{eq9}, hence
\begin{align}
1\geq\prod_{i=1}^{s(k)}\left(1-\frac{1}{u_i}\right)
&\geq\prod_{i=1}^{s(k)}\left(1-\frac{1}{p_{h+k+\ell(k)}+i}\right)\notag\\
&=   \prod_{i=1}^{s(k)}\frac{p_{h+k+\ell(k)}+i-1}{p_{h+k+\ell(k)}+i}
=\frac{p_{h+k+\ell(k)}}{p_{h+k+\ell(k)}+s(k)}\notag\\
&>\frac{p_{h+k+\ell(k)}}{p_{h+k+\ell(k)}+k+\ell(k)},%\tag{10}
\label{eq10}
\end{align}
since the last product is a telescoping product.

From the prime number theorem we know that $p_k\sim k\log(k)$
for $k\to\infty$, and thus
\[
\frac{k+\ell(k)}{p_{h+k+\ell(k)}}
\sim\frac{k+\ell(k)}{(h+k+\ell(k))\log(h+k+\ell(k))}\to 0.
\]
Hence
\begin{equation}\label{eq11}
\lim_{k\to\infty}\frac{p_{h+k+\ell(k)}}{p_{h+k+\ell(k)}+k+\ell(k)}
=\lim_{k\to\infty}\left(1+\frac{k+\ell(k)}{p_{h+k+\ell(k)}}\right)^{-1}=1.
\end{equation}
Using \eqref{eq10} and \eqref{eq11}, we deduce that 
\begin{equation}\label{eq12}
\prod_{i=1}^{s(k)}\left(1-\frac{1}{u_i}\right)\sim1\end{equation}
for $k\to\infty$. With \eqref{eq8} we obtain
\[
\frac{\phi(n_k+d)}{n_k+d}
=\frac{\phi(\gamma_F(y_k))}{\gamma_F(y_k)}\cdot
 \frac{\phi(A_k)}{A_k}\cdot
 \prod_{i=1}^{s(k)}\left(1-\frac{1}{u_i}\right).
\]
Then \eqref{eq12} yields  
\begin{equation}\label{eq13}
\frac{\phi(n_k+d)}{n_k+d}
\sim
\frac{\phi(\gamma_F(y_k))}{\gamma_F(y_k)}\cdot
\frac{\phi(A_k)}{A_k}
\end{equation}
for $k\to\infty$.

Analogously, we obtain by $(x_k,A_k)=1$ the factorization
\begin{equation}\label{eq14}
n_k+b=B_kx_k
=\gamma_F(x_k)
\cdot p_{h+k+1}^{\delta_{1}}\cdots p_{h+k+\ell(k)}^{\delta_{\ell(k)}}
\cdot v_1^{\epsilon_1}\cdots v_{t(k)}^{\epsilon_{t(k)}}
\end{equation}
with ${t(k)}\in\N_0$, with $\delta_1,\dots,\delta_{\ell(k)},
\epsilon_1,\dots,\epsilon_{t(k)}\in\N$ and primes $v_1,\dots,v_{t(k)}$ such that
\[
p_{h+k+\ell(k)}<v_1<\cdots<v_{t(k)}.
\]
Assume that ${t(k)}\geq k+\ell(k)$. Then
\[
x_k
\geq v_1\cdots v_{t(k)}
>p_{h+k+\ell(k)}^{t(k)}
\geq p_{h+k+\ell(k)}^{k+\ell(k)}
\geq A_kB_k,
\]
which contradicts $x_k\leq A_kB_k$, hence $t(k)<k+\ell(k)$.

Again, we have $v_i\geq p_{h+k+\ell(k)}+i$ for all $1\leq i\leq t(k)$ and thus
\begin{align}
1\geq\prod_{i=1}^{t(k)}\left(1-\frac{1}{v_i}\right)
&\geq\prod_{i=1}^{t(k)}\left(1-\frac{1}{p_{h+k+\ell(k)}+i}\right) \notag \\
&=   \prod_{i=1}^{t(k)}\frac{p_{h+k+\ell(k)}+i-1}{p_{h+k+\ell(k)}+i}
=\frac{p_{h+k+\ell(k)}}{p_{h+k+\ell(k)}+t(k)} \notag \\
&>\frac{p_{h+k+\ell(k)}}{p_{h+k+\ell(k)}+k+\ell(k)}.%\tag{15}
\label{eq15}
\end{align}
Hence by \eqref{eq15} and \eqref{eq11}, we obtain 
\begin{equation}
  \label{eq16}
  \prod_{i=1}^{t(k)}\left(1-\frac{1}{v_i}\right)\sim1
\end{equation}
for $k\to\infty$. With \eqref{eq14} we obtain
\begin{align*}
\frac{\phi(n_k+b)}{n_k+b}
=\frac{\phi(\gamma_F(x_k))}{\gamma_F(x_k)}\cdot
 \frac{\phi(B_k)}{B_k}\cdot
 \prod_{i=1}^{t(k)}\left(1-\frac{1}{v_i}\right).
\end{align*}
Hence \eqref{eq16} shows 
\begin{equation}\label{eq17}
\frac{\phi(n_k+b)}{n_k+b}
\sim
\frac{\phi(\gamma_F(x_k))}{\gamma_F(x_k)}\cdot
\frac{\phi(B_k)}{B_k}
\end{equation}
for $k\to\infty$.
Since $\lim_{k\to\infty}\frac{n_k+d}{n_k+b}=1$,
an application of Equation~\eqref{eq17}, \eqref{eq13} and \eqref{eq5}
%(17),(13) and (5)
shows that
\begin{align}
     \frac{\phi(n_k+b)}{\phi(n_k+d)}
&\sim\frac{\phi(n_k+b)}{n_k+b}\cdot
     \frac{n_k+d}{\phi(n_k+d)} \notag\\
&\sim\frac{\phi(\gamma_F(x_k))/\gamma_F(x_k)}
          {\phi(\gamma_F(y_k))/\gamma_F(y_k)}\cdot
     \frac{\phi(B_k)/B_k}{\phi(A_k)/A_k} \notag\\
&\sim\frac{\phi(\gamma_F(x_k))/\gamma_F(x_k)}
          {\phi(\gamma_F(y_k))/\gamma_F(y_k)}\cdot z'%\tag{18}
          \label{eq18}
\end{align}
for $k\to\infty$.

\textsc{Step 4}: \emph{The factor $\displaystyle
  \frac{\phi(\gamma_F(x_k))/\gamma_F(x_k)}{\phi(\gamma_F(y_k))/\gamma_F(y_k)}$
  is constant.}

By \eqref{eq7} and \eqref{eq6}, we have
\[A_ky_k=n_k+d\equiv r+d\mod F.\]
Hence $(A_ky_k,F)=(r+d,F)=r+d$ by Lemma~\ref{Lem2.5} since $r+d\mid F$.
By $(F,A_k)=1$ it follows that $(y_k,F)%=(A_ky_k,F)
=r+d$, thus
\begin{equation}\label{eq19}
\frac{\phi(\gamma_F(y_k))}{\gamma_F(y_k)}
=\prod_{\substack{p\mid y_k\\p\mid F}}\left(1-\frac{1}{p}\right)
=\prod_{p\mid(y_k,F)}\left(1-\frac{1}{p}\right)
=\frac{\phi(r+d)}{r+d}.%\tag{19}
\end{equation}
Analogously, we obtain by \eqref{eq7} and \eqref{eq6}
\[B_kx_k=n_k+b\equiv r+b\mod F\]
and thus $(B_kx_k,F)=(r+b,F)=r+b$ by Lemma~\ref{Lem2.5}
since $r+b\mid F$. By $(F,B_k)=1$ it follows that
$(x_k,F)%=(B_kx_k,F)
=r+b$, thus
\begin{equation}\label{eq20}
\frac{\phi(\gamma_F(x_k))}{\gamma_F(x_k)}
=\prod_{\substack{p\mid x_k\\p\mid F}}\left(1-\frac{1}{p}\right)
=\prod_{p\mid(x_k,F)}\left(1-\frac{1}{p}\right)
=\frac{\phi(r+b)}{r+b}.%\tag{20}
\end{equation}
Now Equations~\eqref{eq18},\eqref{eq19}, \eqref{eq20} and \eqref{eq1}
show that finally
\[
\frac{\phi(n_k+b)}{\phi(n_k+d)}
\sim\frac{\phi(\gamma_F(x_k))/\gamma_F(x_k)}
         {\phi(\gamma_F(y_k))/\gamma_F(y_k)}\cdot z'
=\frac{\phi(r+b)/(r+b)} %
      {\phi(r+d)/(r+d)}\cdot z'
=z
\]
for $k\to\infty$, which completes the proof.\end{proof}

Having proved this special case, we use the result of Lemma~\ref{Lem4.1}
to prove the general case hereinafter, which is the first statement of Theorem~\ref{Thm4.2}

\begin{theorem}\label{Thm4.2 dense case}
For all $a,c\in\N$ and $b,d\in\Z$ such that $ad\neq bc$, the set
\[\left\{
\frac{\phi(an+b)}{\phi(cn+d)};\ 
n\in\N,\ 
n>\max\Big\{-\frac{b}{a}, -\frac{d}{c}\Big\}
\right\}\]
is dense in $]0,\infty[$.
\end{theorem}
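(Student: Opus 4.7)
The plan is to adapt the proof of Lemma~\ref{Lem4.1} essentially verbatim, exploiting the fact that the hypothesis $ad \neq bc$ provides the nonzero integer invariant
\[
a(cn+d) - c(an+b) = ad - bc,
\]
which takes over the role that the constant $d-b$ plays in the case $a=c=1$ handled there.

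Given $z > 0$, I would fix any $r \in \N$ large enough that $ar + b > 0$ and $cr + d > 0$, and set
\[
z' := \frac{\phi(cr+d)/(cr+d)}{\phi(ar+b)/(ar+b)} \cdot \frac{c}{a} \cdot z > 0.
\]
The extra factor $c/a$ compensates for the asymptotic $(an+b)/(cn+d) \to a/c$, which replaces the trivial limit $1$ appearing in Lemma~\ref{Lem4.1}. Letting $p_h$ be the largest prime dividing the positive integer $ac \cdot |ad-bc| \cdot (ar+b)(cr+d)$, I would then define
\[
F := ac \cdot |ad-bc| \cdot (ar+b)(cr+d) \cdot p_1 \cdots p_h.
\]
Compared with Lemma~\ref{Lem4.1}, the only new features are the factor $ac$ in $F$ (so that the primes used to build $A_k, B_k$ are automatically coprime to both $a$ and $c$) and the replacement of $|b-d|$ by $|ad-bc|$.

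With $F$ in hand, the construction of $A_k = p_{h+1} \cdots p_{h+k}$ and $B_k = p_{h+k+1} \cdots p_{h+k+\ell(k)}$ from Step~1 of Lemma~\ref{Lem4.1}, along with the limit $(\phi(B_k)/B_k)/(\phi(A_k)/A_k) \to z'$, goes through unchanged. By the Chinese remainder theorem, the system $n_k \equiv r \pmod F$, $cn_k + d \equiv 0 \pmod{A_k}$, $an_k + b \equiv 0 \pmod{B_k}$ admits a common solution $n_k$ with $FA_kB_k < n_k \leq 2FA_kB_k$, since $\gcd(c, A_k) = \gcd(a, B_k) = 1$ follows from $ac \mid F$. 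Writing $cn_k + d = A_k y_k$ and $an_k + b = B_k x_k$, the invariant becomes $a A_k y_k - c B_k x_k = ad - bc$, and the conditions $|ad-bc| \mid F$ together with $\gcd(F, A_k) = \gcd(F, B_k) = 1$ force $\gcd(x_k, A_k) = \gcd(y_k, B_k) = 1$---the analogue of the key coprimality in Lemma~\ref{Lem4.1}.

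From this point, Steps~3 and~4 of Lemma~\ref{Lem4.1} transfer line by line: bounding $x_k, y_k \leq A_k B_k$ forces every prime factor of $y_k$ outside $F \cdot A_k$ (resp.\ of $x_k$ outside $F \cdot B_k$) to exceed $p_{h+k+\ell(k)}$, and the telescoping estimate shows these large primes contribute a factor tending to $1$. Lemma~\ref{Lem2.5} applied to $an_k + b \equiv ar+b \pmod F$ and $cn_k + d \equiv cr+d \pmod F$ gives $(x_k, F) = ar+b$ and $(y_k, F) = cr+d$, whence $\phi(\gamma_F(x_k))/\gamma_F(x_k) = \phi(ar+b)/(ar+b)$ and similarly for $y_k$. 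Combining these facts with $(an_k+b)/(cn_k+d) \to a/c$ and the calibration of $z'$ yields $\phi(an_k + b)/\phi(cn_k + d) \to z$, which proves density. I expect the main obstacle to be purely bookkeeping: correctly threading $a$, $c$, and the invariant $ad-bc$ through the coprimality and asymptotic arguments that in Lemma~\ref{Lem4.1} were simpler because $a=c=1$.
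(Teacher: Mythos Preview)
Your approach is correct and genuinely different from the paper's. The paper does \emph{not} rerun the Schinzel-type construction for general $a,c$; instead it reduces to Lemma~\ref{Lem4.1} via the identity (Lemma~\ref{Thm2.3}(iii))
\[
\frac{\phi(an+b)}{\phi(cn+d)}
=\frac{\gamma_{cn+d}(a)\,\phi(\tau_{cn+d}(a))}{\gamma_{an+b}(c)\,\phi(\tau_{an+b}(c))}\cdot
\frac{\phi(acn+bc)}{\phi(acn+ad)},
\]
chooses $n_k$ so that $s_k:=acn_k$ lies in a fixed progression modulo $a^2c^2$ (which freezes the $\gamma$- and $\tau$-factors), and then applies Lemma~\ref{Lem4.1} directly to $\phi(s_k+bc)/\phi(s_k+ad)$, using $bc\neq ad$. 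So the paper's argument is essentially three lines of algebra plus a black-box call to the case $a=c=1$.

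Your route---embedding the invariant $ad-bc$ and the factor $ac$ into $F$ and redoing Steps 1--4 from scratch---works for exactly the reason you identify: $aA_ky_k - cB_kx_k = ad-bc$ replaces $A_ky_k-B_kx_k=d-b$, and the inclusions $ac\mid F$, $|ad-bc|\mid F$, $(ar+b)(cr+d)\mid F$ make all the coprimality and $\gcd$ computations go through. The only cosmetic adjustments are that the bounds $x_k,y_k\le A_kB_k$ now require $A_k>2cF$ and $B_k>2aF$ rather than $A_k,B_k>2F$, which is harmless. What your approach buys is self-containment (no need for the $\gamma$/$\tau$ machinery or the $a^2c^2$ trick); what the paper's approach buys is brevity, since Lemma~\ref{Lem4.1} has already been proved and can simply be invoked.
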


\begin{proof}
  Let $\alpha_n:=an+b$ and $\beta_n:=cn+d$ and let $r\in\N$ be large enough such that $ar+b>0$ and $cr+d>0$.
  By Lemma~\ref{Thm2.3}(iii), 
\[
\frac{\phi(a\beta_n)}{\phi(c\alpha_n)}
=\frac{\gamma_{\beta_n}(a)\phi(\tau_{\beta_n}(a))}
      {\gamma_{\alpha_n}(c)\phi(\tau_{\alpha_n}(c))}\cdot
 \frac{\phi(\beta_n)}{\phi(\alpha_n)},
\]
hence
\begin{align}
\frac{\phi(an+b)}{\phi(cn+d)}
&=\frac{\phi(\alpha_n)}{\phi(\beta_n)} 
=\frac{\gamma_{\beta_n}(a)\phi(\tau_{\beta_n}(a))}
       {\gamma_{\alpha_n}(c)\phi(\tau_{\alpha_n}(c))}\cdot
  \frac{\phi(c\alpha_n)}{\phi(a\beta_n)}\notag\\
&=\frac{\gamma_{\beta_n}(a)\phi(\tau_{\beta_n}(a))}
       {\gamma_{\alpha_n}(c)\phi(\tau_{\alpha_n}(c))}\cdot
       \frac{\phi(acn+bc)}{\phi(acn+ad)}. %\tag{1}
       \label{eqq1}
\end{align}
Let $z>0$ be a real constant
and let $(s_k)_{k\in\N}\subseteq\N$ be a sequence from Lemma~\ref{Lem4.1}
such that $s_k>\max\{-bc,-ad\}$ and $s_k\equiv acr\mod a^2c^2$
for all $k$ with
\begin{equation}\label{eqq2}
\lim_{k\to\infty}\frac{\phi(s_k+bc)}{\phi(s_k+ad)}
=\frac{\gamma_{\alpha_r}(c)\phi(\tau_{\alpha_r}(c))}
      {\gamma_{\beta _r}(a)\phi(\tau_{\beta _r}(a))} \cdot z>0.
\end{equation}
Then there exists a sequence $x_k\in\Z$ such that
$s_k=x_ka^2c^2+acr$. Define $n_k:=s_k/(ac)$. %
Then $n_k>\max\{-b/a,-d/c\}$ and $n_k=x_k ac+r$ for all $k$,
which implies $n_k\equiv r\mod a,c$. Now
\[\alpha_{n_k}=an_k+b\equiv ar+b\mod c.\]
Thus by Lemma~\ref{Lem2.5} we know that
$(\alpha_{n_k},c)=(\alpha_r,c)$ is constant.
Hence $\gamma_{\alpha_{n_k}}(c)=\gamma_{\alpha_r}(c)$ and
\[
\tau_{\alpha_{n_k}}(c)
=\frac{c}{\gamma_{\alpha_{n_k}}(c)}
=\frac{c}{\gamma_{\alpha_r}(c)}
=\tau_{\alpha_r}(c).
\]
Analogously,
\[\beta_{n_k}=cn_k+d\equiv cr+d\mod a.\]
Thus by Lemma~\ref{Lem2.5}, $(\beta_{n_k},a)=(\beta_r,a)$ is also
constant. Then $\gamma_{\beta_{n_k}}(a)=\gamma_{\beta_r}(a)$,
and $\tau_{\beta_{n_k}}(a)=\tau_{\beta_r}(a)$. Therefore the fraction
\[
\frac{\gamma_{\beta _{n_k}}(a)\phi(\tau_{\beta _{n_k}}(a))}
     {\gamma_{\alpha_{n_k}}(c)\phi(\tau_{\alpha_{n_k}}(c))}
=
\frac{\gamma_{\beta _r}(a)\phi(\tau_{\beta _r}(a))}
     {\gamma_{\alpha_r}(c)\phi(\tau_{\alpha_r}(c))}
\]
is constant, too. Using \eqref{eqq1} and \eqref{eqq2}, we obtain
\begin{align*}
\frac{\phi(an_k+b)}{\phi(cn_k+d)}
&=\frac{\gamma_{\beta _r}(a)\phi(\tau_{\beta _r}(a))}
       {\gamma_{\alpha_r}(c)\phi(\tau_{\alpha_r}(c))}\cdot
  \frac{\phi(acn_k+bc)}{\phi(acn_k+ad)}\\
&=\frac{\gamma_{\beta _r}(a)\phi(\tau_{\beta _r}(a))}
       {\gamma_{\alpha_r}(c)\phi(\tau_{\alpha_r}(c))}\cdot
  \frac{\phi(s_k+bc)}{\phi(s_k+ad)}\to z
\end{align*}
for $k\to\infty$.
\end{proof}

To prove the second statement of Theorem~\ref{Thm4.2},
we require an auxiliary result.

\begin{lemma}\label{Lemma fuer Zaehlen von phi Werten}
\begin{enumerate}[(i)]
\item
Let $a,c,n\in\N$ and $b,d\in\Z$. Then
\[\tau_{cn+d}(\gamma_c(a))=\tau_d(\gamma_c(a)),
\quad
\tau_{an+b}(\gamma_a(c))=\tau_b(\gamma_a(c)).\]
\item
  Let $t_1,t_2,u_1,u_2\in\N$ be such that $(t_i,u_j)=1$ for
  $i,j=1,2$, and $\rad(t_1)\neq\rad(t_2)$ or $\rad(u_1)\neq\rad(u_2)$. Then
\begin{equation}\label{lemma fuer Zaehlen ii}
\frac{\phi(t_1)/t_1}{\phi(u_1)/u_1}
\neq
\frac{\phi(t_2)/t_2}{\phi(u_2)/u_2}.
\end{equation}
\item
For all $n\in\N$,
\[\#\{t\in\N;\ t\Mid n\}=2^{\omega(n)}.\]
\end{enumerate}
\end{lemma}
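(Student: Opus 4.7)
The plan is to handle the three parts separately; part (ii) carries most of the content.

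For part (i), I would argue pointwise on primes. Any prime $p$ dividing $\gamma_c(a)$ also divides $c$, hence $p \mid cn$, and therefore $p \mid cn+d$ if and only if $p \mid d$. Consequently the set of prime divisors of $\gamma_c(a)$ that divide $cn+d$ coincides with the set of its prime divisors that divide $d$, and since $\tau_m(x)$ depends only on the set of prime divisors of $m$, this gives $\tau_{cn+d}(\gamma_c(a)) = \tau_d(\gamma_c(a))$. The second identity follows by symmetry.

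For part (ii), I would argue by contradiction. If the two ratios in \eqref{lemma fuer Zaehlen ii} were equal, then cross-multiplying and writing $\phi(n)/n = \prod_{p \mid n}(1-1/p)$ transforms the relation into
\[
\prod_{p \mid t_1}\Big(1-\frac{1}{p}\Big)\prod_{p \mid u_2}\Big(1-\frac{1}{p}\Big)
= \prod_{p \mid t_2}\Big(1-\frac{1}{p}\Big)\prod_{p \mid u_1}\Big(1-\frac{1}{p}\Big).
\]
The coprimality hypotheses $(t_i,u_j)=1$ make the two factors on each side range over disjoint sets of primes, so each side equals $\phi(N_\ell)/N_\ell$ with $N_1 := \rad(t_1)\rad(u_2)$ and $N_2 := \rad(t_2)\rad(u_1)$. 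The key auxiliary claim is that the map $N \mapsto \phi(N)/N$ is injective on squarefree $N$: the largest prime factor $q$ of $N$ can be read off as the largest prime surviving in the denominator of $\phi(N)/N$ written in lowest terms, because no smaller prime $p \mid N$ can have $q$ dividing $p-1$. Induction on $\omega(N)$ then recovers $\rad(N)$ from $\phi(N)/N$. This yields $\rad(t_1)\rad(u_2) = \rad(t_2)\rad(u_1)$; using $(t_1,u_1)=(t_2,u_2)=1$, every prime of $\rad(t_1)$ must appear in $\rad(t_2)$ and vice versa, forcing $\rad(t_1) = \rad(t_2)$ and hence $\rad(u_1) = \rad(u_2)$, contradicting the assumption.

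For part (iii), the condition $t \Mid n$ says that $t$ and $n/t$ are coprime, so the unitary divisors $t$ of $n$ are in bijection with subsets $S \subseteq \{p^k : p^k \Mid n\}$ via $t = \prod_{p^k \in S} p^k$. Since this underlying set has exactly $\omega(n)$ elements, there are $2^{\omega(n)}$ such $t$.

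The main obstacle is the injectivity claim in part (ii). One must rule out any coincidence among finite products of factors $(p-1)/p$ over different sets of primes, and the ``largest prime survives'' argument is the natural route. Some care is needed because $p-1$ can introduce smaller primes already present in $N$ (for instance $3-1 = 2$), so genuine cancellation does occur; but it never touches the maximal prime, which makes the induction go through.
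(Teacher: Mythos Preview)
Your proof is correct. Parts (i) and (iii) match the paper's argument essentially verbatim. In part (ii), however, you take a genuinely different route: you establish the general injectivity of $N \mapsto \phi(N)/N$ on squarefree integers via the ``largest prime survives in the denominator'' argument, deduce $\rad(t_1)\rad(u_2)=\rad(t_2)\rad(u_1)$, and then separate the factors using the coprimality hypotheses. The paper instead first cancels common prime factors so that it may assume $(t_1,t_2)=(u_1,u_2)=1$, cross-multiplies to $t_2u_1\,\phi(t_1)\phi(u_2)=t_1u_2\,\phi(t_2)\phi(u_1)$, and observes that $(t_2u_1,t_1u_2)=1$ forces $t_2u_1\mid\phi(t_2)\phi(u_1)$; combined with $\phi(n)\leq n$ this yields $\phi(u_1)\geq u_1$, hence $u_1=1$, and by symmetry $t_1=t_2=u_2=1$, contradicting the hypothesis on radicals. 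The paper's argument is shorter and uses only the crude bound $\phi(n)\leq n$, whereas yours proves a stronger standalone fact (the injectivity lemma) that has independent interest and spares you the preliminary reduction to pairwise coprime $t_i$ and $u_i$.
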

\begin{proof}
\textsc{Assertion }i):
Let $p^k\Mid\tau_{cn+d}(\gamma_c(a))$. Then $p\nmid cn+d$ and $p\mid c$,
so $p\nmid d$ and thus $p^k\Mid\tau_d(\gamma_c(a))$. 
Now let $p^k\Mid\tau_d(\gamma_c(a))$. Then $p\nmid d$ and $p\mid c$,
thus $p\nmid cn+d$ and $p^k\Mid\tau_{cn+d}(\gamma_c(a))$. Therefore,
\[\tau_{cn+d}(\gamma_c(a))=\tau_d(\gamma_c(a)).\]
We prove $\tau_{an+b}(\gamma_a(c))=\tau_b(\gamma_a(c))$ in exactly the same way.

\medskip
\textsc{Assertion }ii):
We can assume without loss of generality
that $(t_1,t_2)=(u_1,u_2)=1$,
since
for any common prime factor of e.g.\ $t_1$ and $t_2$,
the resulting common factor $1-1/p$ in the product representation of
$\phi(t_1)/t_1$ and $\phi(t_2)/t_2$
can be divided out in each side of
\eqref{lemma fuer Zaehlen ii}.
Assuming the contrary of the statement, we get
\[
t_2u_1\phi(t_1)\phi(u_2)
=
t_1u_2\phi(t_2)\phi(u_1).
\]
Since $(t_2u_1,t_1u_2)=1$, it follows that $t_2u_1\mid\phi(t_2)\phi(u_1)$, hence
\[
\phi(t_2)\phi(u_1)
\geq t_2u_1
\geq \phi(t_2)u_1
\]
and thus $u_1\leq\phi(u_1)$, which implies $u_1=1$.
Similarly, we obtain $u_2=t_1=t_2=1$, which contradicts the
assumption that
$\rad(t_1)\neq \rad(t_2)$ or $\rad(u_1)\neq\rad(u_2)$.

\medskip
\textsc{Assertion }iii):
Let $n=q_1^{k_1}\cdots q_m^{k_m}$ with $m=\omega(n)$ be the prime
factorization of $n$. Then any $t\Mid n$ must be of the form
$t=\prod_{i=1}^m q_i^{e_ik_i}$
for some $e_1,...,e_m\in\{0,1\}$. We thus have 2 choices
for each of the $e_i$, which shows that the count of such
exact divisors $t$ must be equal to $2^m$.
\end{proof}

Now we can complete the proof of Theorem~\ref{Thm4.2} by proving
its second statement.

\begin{theorem}\label{Thm4.2 not dense case}
Let $a,c\in\N$ and $b,d\in\Z$ be constants such that $ad=bc$ and define
\[L:=\left\{
\frac{\phi(an+b)}{\phi(cn+d)};\ 
n\in\N,\ 
n>\max\Big\{-\frac{b}{a}, -\frac{d}{c}\Big\}
\right\}.\]
Then $\#L=2^{\omega(\tau_c(a)\tau_a(c))}$.
\end{theorem}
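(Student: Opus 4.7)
My plan exploits the constraint $ad=bc$ to reduce the problem to a question about a single linear form. Set $g:=\gcd(a,c)$, $\alpha:=a/g$ and $\gamma:=c/g$, so $\gcd(\alpha,\gamma)=1$. From $\alpha d=b\gamma$ together with $\gcd(\alpha,\gamma)=1$ one extracts $k\in\Z$ with $d=\gamma k$ and $b=\alpha k$. Substituting $m:=gn+k$ yields $an+b=\alpha m$ and $cn+d=\gamma m$, so
\[\frac{\phi(an+b)}{\phi(cn+d)}=\frac{\phi(\alpha m)}{\phi(\gamma m)},\]
where, as $n$ runs through the integers $n\in\N$ with $n>-k/g$, the variable $m$ ranges precisely over the positive integers in the arithmetic progression $m\equiv k\mod g$.

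Applying Lemma~\ref{Thm2.3}(iii) to both numerator and denominator and using $\gamma_m(\alpha)\tau_m(\alpha)=\alpha$ gives
\[\frac{\phi(\alpha m)}{\phi(\gamma m)}=\frac{\alpha}{\gamma}\cdot\frac{\phi(\tau_m(\alpha))/\tau_m(\alpha)}{\phi(\tau_m(\gamma))/\tau_m(\gamma)},\]
so the value depends on $m$ only through the pair $(\tau_m(\alpha),\tau_m(\gamma))$, itself determined by which prime divisors of $\alpha\gamma$ fail to divide $m$. I then partition these primes into those dividing $g$ and those coprime to $g$. For $p\mid g$ the condition $p\mid m=gn+k$ collapses to $p\mid k$, which is independent of $n$, so such primes contribute a fixed factor to each $\tau_m(\cdot)$. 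For the remaining primes $p\mid\alpha\gamma$ with $p\nmid g$, the residue of $m$ modulo $p$ sweeps out all of $\Z/p\Z$ as $n$ varies; since these primes together with $g$ are pairwise coprime, the Chinese remainder theorem realizes each of the $2^{\omega(\tau_c(a)\tau_a(c))}$ on/off patterns by infinitely many admissible $m$. The identification uses that $p\mid\alpha$ with $p\nmid g$ forces $p\nmid c$ (since $\gcd(\alpha,\gamma)=1$), so the primes $p\mid\alpha\gamma$ with $p\nmid g$ are exactly the prime divisors of $\tau_c(a)\tau_a(c)$.

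It remains to confirm that distinct patterns produce distinct fractions. Since $\gcd(\alpha,\gamma)=1$, any $t\Mid\alpha$ and $u\Mid\gamma$ satisfy $\gcd(t,u)=1$, so Lemma~\ref{Lemma fuer Zaehlen von phi Werten}(ii) applies: changing the radical on either coordinate changes the ratio $(\phi(t)/t)/(\phi(u)/u)$. Combined with part (iii) of the same lemma, which counts exactly $2^{\omega(\tau_c(a)\tau_a(c))}$ exact-divisor pairs when both coordinates vary, this yields $\#L=2^{\omega(\tau_c(a)\tau_a(c))}$. The step I expect to require the most care is the prime-splitting bookkeeping: one must verify that the primes $p\mid g$ really do contribute a common constant factor to $(\tau_m(\alpha),\tau_m(\gamma))$ across all admissible $m$, and that this frozen contribution does not merge distinct on/off patterns at the primes coprime to $g$.
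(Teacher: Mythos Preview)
Your argument is correct and gives a genuinely different route from the paper's. The paper never introduces the single variable $m=gn+k$; instead it works directly with $\alpha_n=an+b$ and $\beta_n=cn+d$, uses the identity
\[
\frac{\phi(\alpha_n)}{\phi(\beta_n)}=\frac{a}{c}\cdot\frac{\Phi(\tau_{cn+d}(a))}{\Phi(\tau_{an+b}(c))}
\]
(which it had already derived for the dense case), and then splits $a=\gamma_c(a)\tau_c(a)$ and $c=\gamma_a(c)\tau_a(c)$, invoking the dedicated Lemma~\ref{Lemma fuer Zaehlen von phi Werten}(i) to freeze the $\gamma_c(a)$- and $\gamma_a(c)$-parts. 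Your substitution $b=\alpha k$, $d=\gamma k$, $m=gn+k$ collapses both linear forms to $\alpha m$ and $\gamma m$ with $(\alpha,\gamma)=1$, after which Lemma~\ref{Thm2.3}(iii) immediately gives the ratio $\frac{\alpha}{\gamma}\cdot\Phi(\tau_m(\alpha))/\Phi(\tau_m(\gamma))$. This is cleaner: the frozen/free prime dichotomy becomes simply ``$p\mid g$'' versus ``$p\nmid g$'' for primes of $\alpha\gamma$, and the identification $\tau_g(\alpha)=\tau_c(a)$, $\tau_g(\gamma)=\tau_a(c)$ drops out of the coprimality $(\alpha,\gamma)=1$ without a separate lemma. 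The paper's approach has the minor advantage of recycling equation~\eqref{eqq1} from Theorem~\ref{Thm4.2 dense case}, and its CRT system (mod $c$, mod $q_i$; mod $a$, mod $r_i$) is more explicit, but your CRT over the primes $p\mid\tau_c(a)\tau_a(c)$ (all coprime to $g$, hence imposing independent congruences on $n$) is equally valid. The distinctness step is the same in both: Lemma~\ref{Lemma fuer Zaehlen von phi Werten}(ii) applies because every $t\mid\alpha$ is coprime to every $u\mid\gamma$, and the frozen factor (supported on primes dividing $g$) cannot interfere with the variable factor (supported on primes coprime to $g$).
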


\begin{proof}
Let $\Phi(n):=\phi(n)/n$.
Since we assume $ad=bc$, Equation (22) and Lemma~\ref{lemma1}(i) yield
\begin{equation}\label{hauptsatz 2 entartet - phibruch zu Phibruch}
\frac{\phi(an+b)}{\phi(cn+d)}
=
\frac{\gamma_{cn+d}(a)}
     {\gamma_{an+b}(c)}
\cdot
\frac{\phi(\tau_{cn+d}(a))}
     {\phi(\tau_{an+b}(c))}
=
\frac{a}{c}\cdot
\frac{\Phi(\tau_{cn+d}(a))}
     {\Phi(\tau_{an+b}(c))}.
\end{equation}

Now we show that
\begin{align}\label{eq:phibeh}
&\left\{
\frac{\Phi(\tau_{cn+d}(a))}
     {\Phi(\tau_{an+b}(c))}
\,\middle|\,
n\in\N,\ n>\max\Big\{-\frac{b}{a},-\frac{d}{c}\Big\}
\right\} \notag
\\&=
\frac{\Phi(\tau_d(\gamma_c(a)))}
     {\Phi(\tau_b(\gamma_a(c)))}
\cdot
\left\{
\frac{\Phi(t)}{\Phi(u)}
\,\middle|\,
t,u\in\N,\ t\Mid\tau_c(a),\ u\Mid\tau_a(c)
\right\}.
\end{align}

By Lemma~\ref{lemma1}(i), we obtain
\[
\tau_{cn+d}(a)
=\tau_{cn+d}(\gamma_c(a))\cdot\tau_{cn+d}(\tau_c(a)),
\]
where $\tau_{cn+d}(\gamma_c(a))=\tau_d(\gamma_c(a))$ by
Lemma~\ref{Lemma fuer Zaehlen von phi Werten}(i)
and $t:=\tau_{cn+d}(\tau_c(a))$ is an exact divisor of $\tau_c(a)$.
Thus $\tau_{cn+d}(a)$ is of the form
\begin{equation}\label{hauptsatz 2 entartet - Form von tau a}
\tau_{cn+d}(a)
=
\tau_d(\gamma_c(a))\cdot t
\end{equation}
for a $t\Mid\tau_c(a)$. Likewise, we can see that
\[
\tau_{an+b}(c)
=\tau_{an+b}(\gamma_a(c))\cdot\tau_{an+b}(\tau_a(c)),
\]
where $\tau_{an+b}(\gamma_a(c))=\tau_b(\gamma_a(c))$ by
Lemma~\ref{Lemma fuer Zaehlen von phi Werten}(i)
and $u:=\tau_{an+b}(\tau_a(c))\Mid\tau_a(c)$.
Thus $\tau_{an+b}(c)$ is of the form
\begin{equation}\label{hauptsatz 2 entartet - Form von tau c}
\tau_{an+b}(c)
=
\tau_b(\gamma_a(c))\cdot u
\end{equation}
for a $u\Mid\tau_a(c)$. This shows the first inclusion $\subseteq$ in
\eqref{eq:phibeh}.

Now we show the second inclusion $\supseteq$, namely
that for all 
$t,u$ with $t\Mid\tau_c(a)$, $u\Mid\tau_a(c)$,
there are infinitely many $n$
that satisfy both \eqref{hauptsatz 2 entartet - Form von tau a}
and \eqref{hauptsatz 2 entartet - Form von tau c} simultaneously.
Let $\tau_c(a)=q_1^{k_1}\cdots q_m^{k_m}$ and
$\tau_a(c)=r_1^{h_1}\cdots r_\ell^{h_\ell}$ be their prime factorizations
and let $t,u\in\N$ be such that $t\Mid\tau_c(a)$ and $u\Mid\tau_a(c)$.
Then $t$ and $u$ must be of the form
\begin{equation}\label{hauptsatz 2 entartet - Form von t,u}
t=\prod_{i=1}^m q_i^{e_ik_i},
\quad
u=\prod_{i=1}^\ell r_i^{f_ih_i}
\end{equation}
for some $e_1,...,e_m,f_1,...,f_\ell\in\{0,1\}$.
Let us define $Q:=q_1\cdots q_m=\rad(\tau_c(a))$
and $R:=r_1\cdots r_\ell=\rad(\tau_a(c))$ and observe that $Q$ and $R$
are coprime. By the Chinese remainder theorem, there is an $x\in\Z$ such that
\begin{alignat}{2}
\begin{aligned}\label{hauptsatz 2 entartet - Kongruenzen x}
x&\equiv  d &&\mod c,\\
x&\equiv e_i&&\mod q_i,\text{ for }i=1,...,m
\end{aligned}
\end{alignat}
and the solutions of this congruence system are exactly
those $n\in\Z$ that satisfy $n\equiv x\mod cQ$.
Furthermore, we obtain a $y\in\Z$ such that
\begin{alignat}{2}
\begin{aligned}\label{hauptsatz 2 entartet - Kongruenzen y}
y&\equiv  b &&\mod a,\\
y&\equiv f_i&&\mod r_i,\text{ for }i=1,...,\ell
\end{aligned}
\end{alignat}
and all solutions are $n\equiv y\mod aR$.
By the first congruence in \eqref{hauptsatz 2 entartet - Kongruenzen x}
and \eqref{hauptsatz 2 entartet - Kongruenzen y},
there are $n_0,n_1\in\Z$ such that $x=cn_0+d$ and $y=an_1+b$.
Applying the Chinese remainder theorem again, we obtain
infinitely many $n\in\N$ such that
\begin{align*}
n\equiv n_0 &\mod Q,\\
n\equiv n_1 &\mod R.
\end{align*}
This yields us $u,v\in\Z$ such that $n=uQ+n_0=vR+n_1$. Therefore,
\begin{align*}
cn+d=ucQ+cn_0+d\equiv x&\mod cQ,\\
an+b=vaR+an_1+b\equiv y&\mod aR.
\end{align*}
Thus $cn+d$ solves the congruence system
\eqref{hauptsatz 2 entartet - Kongruenzen x} and $an+b$
solves \eqref{hauptsatz 2 entartet - Kongruenzen y}.
By \eqref{hauptsatz 2 entartet - Kongruenzen x},
we have $q_i\nmid cn+d$ if and only if $e_i=1$,
thus $q_i\mid t$ by \eqref{hauptsatz 2 entartet - Form von t,u}.
Therefore, we obtain $\tau_{cn+d}(\tau_c(a))=t$ and
thus $\tau_{cn+d}(a)=\tau_{cn+d}(\gamma_c(a))\cdot t$.
Using Lemma~\ref{Lemma fuer Zaehlen von phi Werten}i),
we obtain $\tau_{cn+d}(a)=\tau_d(\gamma_c(a))\cdot t$,
thus $n$ satisfies \eqref{hauptsatz 2 entartet - Form von tau a}.
Similarly, by \eqref{hauptsatz 2 entartet - Kongruenzen y},
we have $r_i\nmid an+b$ if and only if $f_i=1$,
thus $r_i\mid u$ by \eqref{hauptsatz 2 entartet - Form von t,u}.
Therefore, we obtain $\tau_{an+b}(\tau_a(c))=u$ and
thus $\tau_{an+b}(c)=\tau_{an+b}(\gamma_a(c))\cdot u=\tau_b(\gamma_a(c))\cdot u$,
which means that we have found infinitely many $n$
that satisfy \eqref{hauptsatz 2 entartet - Form von tau a}
and \eqref{hauptsatz 2 entartet - Form von tau c}.
Combining \eqref{hauptsatz 2 entartet - Form von tau a}
and \eqref{hauptsatz 2 entartet - Form von tau c}
with the fact that $(c,t)=(u,a)=1$, we can use the multiplicativity
of $\Phi$ to see that for every $n$,
\[
\frac{\Phi(\tau_{cn+d}(a))}
     {\Phi(\tau_{an+b}(c))}
=
\frac{\Phi(\tau_d(\gamma_c(a)))}
     {\Phi(\tau_b(\gamma_a(c)))}
\cdot
\frac{\Phi(t)}{\Phi(u)}
\]
for some $t\Mid\tau_c(a)$ and $u\Mid\tau_a(c)$.

We have thus proven that \eqref{eq:phibeh} holds.

Multiplying the elements on both sides of \eqref{eq:phibeh}
by $\frac{a}{c}$
and using \eqref{hauptsatz 2 entartet - phibruch zu Phibruch}, we obtain
\begin{equation}\label{hauptsatz 2 entartet - Geschlossene Form von L}
L=
\frac{a}{c}
\cdot
\frac{\Phi(\tau_d(\gamma_c(a)))}
     {\Phi(\tau_b(\gamma_a(c)))}
\cdot
\left\{
\frac{\Phi(t)}{\Phi(u)}
\,\middle|\,
t,u\in\N,\ t\Mid\tau_c(a),\ u\Mid\tau_a(c)
\right\},
\end{equation}
where $\frac{\phi(an+b)}{\phi(cn+d)}$ assumes every value on the right side for infinitely many $n$.
Using Lemma~\ref{Lemma fuer Zaehlen von phi Werten}(ii), we can see that two distinct pairs of $t,u$ in \eqref{hauptsatz 2 entartet - Geschlossene Form von L} yield a different value of $\frac{\Phi(t)}{\Phi(u)}$, hence
\[\#L=
\#\{t\in\N;\ t\Mid\tau_c(a)\}
\cdot
\#\{u\in\N;\ u\Mid\tau_a(c)\}.\]
By Lemma~\ref{Lemma fuer Zaehlen von phi Werten}(iii) and the additivity of \(\omega\), we conclude
\[
\#L
=2^{\omega(\tau_c(a))}\cdot 2^{\omega(\tau_a(c))}\\
=2^{\omega(\tau_c(a)\tau_a(c))}.
\]
\end{proof}

\begin{rem}\label{Rnote2}
Note that similarly to Corollary~\ref{Cor3.3}, we can easily obtain a generalization of Theorem~\ref{Thm4.2} where $n$ belongs to an arbitrary arithmetic progression $n\equiv r\mod m$. We do not give the details here.
\end{rem}

%------------------------------------------------------------
%Section5:
\section{Conditions for $M=\N_{>-b/a}$}
\label{sec5}

\subsection{A complete characterization}\label{sec5.1}
By Corollary~\ref{Thm3.4} we know that the density interval $]0,D]$
in Theorem~\ref{Thm3.1} is the
largest possible, since the sequence of fractions $\phi(an+b)/(cn+d)$
has no limit point in $]-\infty,-\epsilon[$ or $]D+\epsilon,\infty[$
for arbitrary $\epsilon>0$.
Hence the sequence %
cannot be dense in $]-2\epsilon,D]$ or $]0,D+2\epsilon]$.

In this section we observe that there are cases
in which the set $M$ in Theorem~\ref{Thm3.1} contains \emph{all}
positive integers.
The easiest case is $a=c=1$, $b=d=0$. Then $D=1$ and
\[0<\frac{\phi(n)}{n}\leq 1.\]
Hence the arithmetic function $\phi(n)/n$
does not hit any value outside of $]0,1]$, which implies that its image
over all $n\in\N$ is dense in $]0,1]$, i.\,e.\ $M=\N$.

In Theorem~\ref{Thm3.6} below we provide a complete answer
when this situation happens. For this, we introduce
the following notation.

\begin{deff}\label{Def3.5}
  Let $a\in\N$, $b\in\Z$ and put $g:=(a,b)$, $a':=a/g$, $b':=b/g$.
  Then define
\[R(a,b):=\{n\in\N_{>-b/a}\ ;\  \rad(a'n+b')\mid g\},\]
\[
R_0(a,b):=\{
n\in\Z_{>-b/a}\ ;\  
\forall p^k\| (a'n+b'):
p\mid g
,\:
k<\ord_{a'}(p)
\}
\]
and
\[\rho(a,b):=\#R(a,b),\ \rho_0(a,b):=\#R_0(a,b),\]
which may be $\infty$.
\end{deff}

Since $\rad(a'n+b')$ is not defined for $a'n+b'=0$, we assume $n>-b/a$,
i.e.\ $a'n+b'>0$. Note that $R_0(a,b)\cap\N\subseteq R(a,b)$.
Since $(a',a'n+b')=(a',b')=1$ by Lemma~\ref{Lem2.5}, we obtain that
all $p\mid(a'n+b')$ are coprime to $a'$, and thus $\ord_{a'}(p)$ occurring in
the definition of $R_0(a,b)$ exists.

We have $\rho_0(a,b)<\infty$ for all pairs $a,b$, since $g$ determines
an upper bound to the number of distinct prime divisors of $a'n+b'$, and
$\ord_{a'}(p)$ determines an upper bound to their multiplicity. Furthermore,
note that $\rad(a'n+b')\mid g$ is equivalent to $\tau_g(a'n+b')=1$.

\medskip
We provide a complete characterization of the condition
$M=\N_{>-b/a}$ in terms of $\rho(a,b)$ with the next theorem.
\begin{theorem}\label{Thm3.6}
  Let $a\in\N,b\in\Z,c\in\R_{\neq0},d\in\R$ be constants.
  Put $g:=(a,b)$ and $a':=a/g$, $b':=b/g$, $r_n:=a'n+b'$. Let
  \begin{multline*}
    M:=\left\{
n\in\N;\ 
n>\max\Big\{-\frac{b}{a},-\frac{d}{c}\Big\}
\text{ and }
\frac{\phi(an+b)}{cn+d}\leq\frac{a\phi(g)}{cg}
\right\},\\
R'(a,b):=\{n\in\N_{>-b/a};\ \tau_g(r_n)\geq2\},\\
P(a,b):=\{n\in\N;\ r_n\in\P\text{ and }r_n\nmid g\}.
\end{multline*}
Clearly $R(a,b)\cup R'(a,b)=\N_{>-b/a}$. Then
\begin{enumerate}[i)]
%a)
\item $\#M=\infty$ and
\[M=\left\{
n\in\N;\ 
n>-\frac{b}{a}
\text{ and }
n\geq\frac{\gamma_g(r_n)\phi(\tau_g(r_n))}{a'}-\frac{d}{c}
\right\}.\]
%b)
\item
$\#P(a,b)=\infty$ and $P(a,b)\subseteq R'(a,b)$. Furthermore,
\[M\cap R(a,b)\neq\emptyset\implies d/c\geq b/a\implies M=\N_{>-b/a}\]
and
\[M\cap P(a,b)\neq\emptyset\implies d/c\geq(b-g)/a\implies R'(a,b)\subseteq M.\]
%c)
\item
  $M=\N_{>-b/a}$ if and only if
\[
d/c\geq\begin{cases}
b/a,&\text{if }\rho(a,b)\geq1,\\
(b-g)/a,&\text{otherwise}.
\end{cases}
\]
\end{enumerate}
\end{theorem}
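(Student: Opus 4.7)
The plan is to convert the inequality $\phi(an+b)/(cn+d)\leq D$ defining $M$ into an explicit arithmetic condition via Lemma~\ref{Thm2.3}(iii), and then analyse when that condition holds for every $n>-b/a$ by splitting according to whether $\tau_g(r_n)=1$ (so $n\in R(a,b)$) or $\tau_g(r_n)\geq 2$ (so $n\in R'(a,b)$). Since $R(a,b)$ and $R'(a,b)$ partition $\N_{>-b/a}$, both directions of (iii) fall out of examining the extremal behaviour in each class.

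For (i), I would use $an+b=g\,r_n$ and Lemma~\ref{Thm2.3}(iii) to obtain $\phi(an+b)=\gamma_g(r_n)\,\phi(\tau_g(r_n))\,\phi(g)$. Substituting this together with $D=a'\phi(g)/c$ into the defining inequality and clearing denominators (for $c>0$) yields exactly $n\geq \gamma_g(r_n)\phi(\tau_g(r_n))/a' - d/c$, giving the claimed description of $M$; the infinitude of $M$ is immediate from Theorem~\ref{Thm3.1}. For (ii), infinitude of $P(a,b)$ is Dirichlet's theorem applied to the progression $b'\bmod a'$ (where $(a',b')=1$), after discarding the finitely many primes dividing $g$. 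The inclusion $P(a,b)\subseteq R'(a,b)$ is immediate, since $n\in P(a,b)$ forces $\gamma_g(r_n)=1$ and $\tau_g(r_n)=r_n\geq 2$. For the two chains of implications, I substitute into the formula from (i): when $n\in R(a,b)$ one has $\gamma_g(r_n)=r_n$ and $\phi(\tau_g(r_n))=1$, reducing the inequality to $d/c\geq b/a$; conversely $\gamma_g(r_n)\phi(\tau_g(r_n))\leq \gamma_g(r_n)\tau_g(r_n)=r_n$ for every $n$, so $d/c\geq b/a$ already suffices universally and simultaneously ensures $-d/c\leq -b/a$, making the constraint $n>-d/c$ automatic. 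Similarly, for $n\in P(a,b)$ the formula reduces to $d/c\geq (b-g)/a$, and for any $n\in R'(a,b)$ the bound $\phi(\tau_g(r_n))\leq \tau_g(r_n)-1$ gives $\gamma_g(r_n)\phi(\tau_g(r_n))\leq r_n-\gamma_g(r_n)\leq r_n-1$, so $d/c\geq (b-g)/a$ suffices for all of $R'(a,b)$.

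Part (iii) then follows by cases. If $\rho(a,b)\geq 1$, any witness in $R(a,b)$ forces $d/c\geq b/a$ by (ii), while the converse is furnished by (ii) directly. If $\rho(a,b)=0$, then $R(a,b)=\emptyset$, so $\N_{>-b/a}=R'(a,b)$, and the second chain of (ii) supplies the equivalence with $d/c\geq (b-g)/a$. The main delicate point of the whole argument is the positivity constraint $n>-d/c$ in this second case: I would observe that any positive integer $n$ with $-b/a<n\leq (g-b)/a$ must satisfy $r_n=1$ (since $g\leq g r_n=an+b\leq g$), hence $\rad(r_n)=1\mid g$, hence $n\in R(a,b)$, contradicting $\rho(a,b)=0$. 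Therefore no such integers exist, and the assumption $d/c\geq (b-g)/a$, i.e.\ $-d/c\leq (g-b)/a$, automatically implies $n>-d/c$ for every $n>-b/a$, closing the argument.
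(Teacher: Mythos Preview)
Your proposal is correct and follows essentially the same route as the paper: both arguments rewrite $\phi(an+b)$ via Lemma~\ref{Thm2.3}(iii) to obtain the arithmetic reformulation of $M$ in (i), then handle the two chains in (ii) by evaluating the extremal cases $\tau_g(r_n)=1$ (for $R(a,b)$) and $\tau_g(r_n)=r_n$ prime (for $P(a,b)$), using $\phi(\tau_g(r_n))\leq\tau_g(r_n)$ respectively $\phi(\tau_g(r_n))\leq\tau_g(r_n)-1$ for the converse inclusions, and finally assemble (iii) by the partition $R(a,b)\cup R'(a,b)=\N_{>-b/a}$.

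One small remark: your closing ``delicate point'' about the constraint $n>-d/c$ is in fact redundant once (i) is established in full. The arithmetic inequality $n\geq\gamma_g(r_n)\phi(\tau_g(r_n))/a'-d/c$ already forces $n+d/c\geq\gamma_g(r_n)\phi(\tau_g(r_n))/a'>0$, so $n>-d/c$ is automatic for every $n$ in the right-hand description of $M$. The paper makes exactly this observation explicit in its proof of (i), after which the condition $n>-d/c$ never needs to be revisited in (ii) or (iii). Your separate verification (via the observation that $r_n=1$ for any integer $n$ in $]-b/a,(g-b)/a]$, which would lie in $R(a,b)$) is correct but unnecessary.
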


\begin{proof}
Since $d/c\in\R$ and
\[
\frac{\phi(an+b)}{cn+d}\leq\frac{a\phi(g)}{cg}
\iff\frac{\phi(an+b)}{n+d/c}\leq\frac{a\phi(g)}{g},
\]
we may replace $d/c$ by $d$ and assume $c=1$ without loss of generality.

\medskip
\textsc{Assertion }i):
$\#M=\infty$ follows immediately from Theorem~\ref{Thm3.1}.
Note that by Lemma~\ref{Thm2.3},
\begin{multline*}
\frac{n+d}{\phi(an+b)}\cdot\frac{a\phi(g)}{g}
=\frac{n+d}{\phi(r_ng)}\cdot a'\phi(g) \\
=\frac{n+d}{\gamma_g(r_n)\phi(\tau_g(r_n))\phi(g)}\cdot a'\phi(g)
=\frac{n+d}{\gamma_g(r_n)\phi(\tau_g(r_n))/a'}.
\end{multline*}
If $n+d>0$, we deduce
\begin{multline}\label{eqt1}
  \frac{\phi(an+b)}{n+d} \leq\frac{a\phi(g)}{g}\\
\iff 1\leq\frac{n+d}{\phi(an+b)}\cdot\frac{a\phi(g)}{g}
=\frac{n+d}{\gamma_g(r_n)\phi(\tau_g(r_n))/a'}\\
\iff \frac{\gamma_g(r_n)\phi(\tau_g(r_n))}{a'}\leq n+d. %\tag{1}
\end{multline}
Now define
\[M':=\left\{
n\in\N;\ n>-\frac{b}{a}\text{ and }
n\geq\frac{\gamma_g(r_n)\phi(\tau_g(r_n))}{a'}-d
\right\}.\]
Let $n\in M$. Then $n>\max\{-b/a,-d\}$, which implies $n+d>0$. Furthermore
\[\frac{\phi(an+b)}{n+d}\leq\frac{a\phi(g)}{g},\]
and \eqref{eqt1} yields $n\in M'$.
Now let $n\in M'$. Then $n>-b/a$ and
\[n+d\geq\frac{\gamma_g(r_n)\phi(\tau_g(r_n))}{a'}>0,\]
hence $n>-d$, and from \eqref{eqt1} we obtain $n\in M$.
Thus $M=M'$.

\medskip
\textsc{Assertion }ii):
%%%
By Dirichlet's prime number theorem there are infinitely many $n$ such that
$r_n=a'n+b'\in\P$ (if $b=0$ then $a'=1$). Since $g$ has only finitely
many prime factors, this implies that $\#P(a,b)=\infty$.

Let $n\in P(a,b)$. Then $(r_n,g)=1$ and thus $\tau_g(r_n)=r_n\geq2$.
Since $r_n=a'n+b'>0$, we get $n>-b'/a'=-b/a$, thus $n\in R'(a,b)$,
so that $P(a,b)\subseteq R'(a,b)$.

Assume that there is an $n\in M\cap R(a,b)$. Then $\gamma_g(r_n)=r_n$
and $\tau_g(r_n)=1$, and by i) we obtain
\[
d\geq\frac{\gamma_g(r_n)\phi(\tau_g(r_n))}{a'}-n
=\frac{r_n}{a'}-n
=\frac{b'}{a'}
=\frac{b}{a}.
\]
Now let $d\geq b/a$. Then for all $n\in\N_{>-b/a}$ we deduce
\[
\frac{\gamma_g(r_n)\phi(\tau_g(r_n))}{a'}-d
\leq\frac{\gamma_g(r_n)\tau_g(r_n)}{a'}-\frac{b}{a}
=\frac{r_n}{a'}-\frac{b'}{a'}
=n.
\]
Therefore $n\in M$ and thus $M=\N_{>-b/a}$.

Assume that there is an $n\in M\cap P(a,b)$. Then $\gamma_g(r_n)=1$
and $\tau_g(r_n)=r_n$, thus
\[
d\geq\frac{\gamma_g(r_n)\phi(\tau_g(r_n))}{a'}-n
=\frac{\phi(r_n)}{a'}-n
=\frac{r_n-1}{a'}-n
=\frac{b'-1}{a'}
=\frac{b-g}{a}.
\]
Now let $d\geq (b-g)/a$ and $n\in R'(a,b)$. Since $\tau_g(r_n)\geq2$
and $\gamma_g(r_n)\geq1$, we obtain
\begin{multline*}
\frac{\gamma_g(r_n)\phi(\tau_g(r_n))}{a'}-d
\leq\frac{\gamma_g(r_n)(\tau_g(r_n)-1)}{a'}-\frac{b-g}{a}\\
=\frac{r_n-\gamma_g(r_n)}{a'}-\frac{b'-1}{a'}
=n+\frac{1-\gamma_g(r_n)}{a'}
\leq n,
\end{multline*}
which shows that $n\in M$ and thus $R'(a,b)\subseteq M$.

\medskip
\textsc{Assertion }iii): Assume that $M=\N_{>-b/a}$. Since
$P(a,b)\neq\emptyset$, we get $M\cap P(a,b)\neq\emptyset$, and then ii)
implies $d/c\geq(b-g)/a$. If $R(a,b)\neq\emptyset$, then
$M\cap R(a,b)\neq\emptyset$ and $d/c\geq b/a$.

If $d/c\geq b/a$, then ii) implies $M=\N_{>-b/a}$.
If $d/c\geq(b-g)/a$ and $R(a,b)=\emptyset$, then $R'(a,b)=\N_{>-b/a}$
and thus $M=\N_{>-b/a}$.
\end{proof}

\subsection{Determination of $\rho(a,b)$} \label{sec5.2}
For a fixed given pair $a,b$ of integers, it is quite difficult to determine
an $n\in\N$ such that $\rad(a'n+b')\mid g$, i.e.\ $n\in R(a,b)$,
or show its nonexistence. The following supplementary theorem
allows us to determine the
value of $\rho(a,b)$, at least for some pairs $a,b$.
Theorem~\ref{Thm3.8} then allows us to calculate
$\rho(a,b)$ for any pair $a,b$, which we perform in Algorithm~\ref{Algo2}.

\begin{theorem}\label{Thm3.7}
  Let $a\in\N$, $b\in\Z$ and $g:=(a,b)$, $a':=a/g$, $b':=b/g$.
\begin{enumerate}[i)]%
\item %
  Assume $g\geq2$ and that at least one of the following
  cases is true.
\begin{enumerate}[(1)]
\item $a=g$,
\item $\rad(g)\nmid a'$, $b=g$,
\item $b>g$, $\rad(b)\mid g$.
\end{enumerate}
Then $\rho(a,b)=\infty$. Expressed in terms of $\rho$, this is equivalent to
\begin{enumerate}[(1)]
\item $\rho(a,ak)=\infty$ for $a\geq2$ and any $k\in\Z$,
\item $\rho(kb,b)=\infty$ for $b\geq2$ and any $k\in\N$
  such that $\rad(b)\nmid k$,
\item $\rho(a,b)=\infty$ for $b>g$ and $\rad(b)\mid g$.
\end{enumerate}
\item %b)
  There is an $n\in\N_{>-b/a}$ such that $a'n+b'=1$ if and only if
  $b'\leq0$ and $a'\mid1-b'$. If $\rad(g)\mid a'$, this is the only
  $n\in\N_{>-b/a}$ such that $\rad(a'n+b')\mid g$, hence
\[
\rho(a,b)=
\begin{cases}
1,&\text{if }b'\leq0\text{ and }a'\mid1-b',\\
0,&\text{otherwise}.
\end{cases}
\]
\end{enumerate}
\end{theorem}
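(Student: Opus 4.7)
The plan is to prove part~(i) by an explicit construction in each of the three cases, and to deduce part~(ii) from a short coprimality argument. The recurring idea in part~(i) is to arrange $a'n+b'$ to equal (up to a controlled factor) $p^{k}$ for a prime $p\mid g$ and a suitable exponent $k$, so that $\rad(a'n+b')\mid g$ for infinitely many $n$.

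For case~(1), $a=g$ forces $a'=1$, so I choose any prime $p\mid g$ (which exists since $g\geq 2$) and set $n_{k}:=p^{k}-b'$ for $k$ large enough to place $n_{k}$ in $\N_{>-b/a}$; then $a'n_{k}+b'=p^{k}$ has $\rad(p^{k})=p\mid g$. For case~(2), $b=g$ gives $b'=1$, and $\rad(g)\nmid a'$ supplies a prime $p\mid g$ with $p\nmid a'$, so $\ell:=\ord_{a'}(p)$ is defined and the integers $n_{j}:=(p^{j\ell}-1)/a'$ satisfy $a'n_{j}+b'=p^{j\ell}$. Case~(3) requires a short preliminary observation: if $\rad(g)\mid a'$ held, then from $\rad(b)\mid g$ I would obtain $\rad(b')\mid\rad(b)\mid a'$, and $(a',b')=1$ would force $b'=1$, i.e.\ $b=g$, contradicting $b>g$. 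Hence a prime $p\mid g$ with $p\nmid a'$ again exists, and setting $n_{j}:=b'(p^{j\ell}-1)/a'$ with $\ell:=\ord_{a'}(p)$ yields $a'n_{j}+b'=b'p^{j\ell}$, whose radical divides $\rad(b')\cdot p$ and hence $g$ since $\rad(b)\mid g$ and $p\mid g$. The three $\rho$-reformulations follow by renaming $(a,b)$ as $(a,ak)$ and $(kb,b)$ respectively.

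For part~(ii), the first claim is direct: $a'n+b'=1$ forces $n=(1-b')/a'$, which belongs to $\N_{>-b/a}$ precisely when $a'\mid 1-b'$ and $b'\leq 1-a'$; distinguishing $a'=1$ from $a'\geq 2$ shows this is equivalent to ``$b'\leq 0$ and $a'\mid 1-b'$''. For the uniqueness part, suppose $n\in\N_{>-b/a}$ satisfies $\rad(a'n+b')\mid g$. Since $\rad(g)\mid a'$ by hypothesis, every prime factor of $a'n+b'$ divides $a'$. On the other hand, $a'n+b'\equiv b'\pmod{a'}$ combined with $(a',b')=1$ gives $(a',a'n+b')=1$ via Lemma~\ref{Lem2.5}, so $a'n+b'$ is a positive integer coprime to $a'$ all of whose prime divisors lie in $a'$, forcing $a'n+b'=1$; this confirms that $R(a,b)$ contains at most the single element identified in the first claim.

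The only nontrivial step, and therefore the main obstacle, is the compatibility observation in case~(3) of part~(i): one must verify that the hypotheses $b>g$ and $\rad(b)\mid g$ preclude the degenerate regime $\rad(g)\mid a'$ handled in part~(ii), for otherwise the prime $p$ powering the construction would not exist. Once this compatibility is in place, all three witnesses are entirely explicit and only elementary arithmetic is required.
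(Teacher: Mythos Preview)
Your proposal is correct and follows essentially the same architecture as the paper's proof: explicit witnesses in each of the three cases of part~(i), and the same coprimality argument for part~(ii). Cases~(1) and~(2) and all of part~(ii) are virtually identical to the paper (the paper uses $g^{k}$ instead of $p^{k}$ in case~(1), which is a cosmetic difference).

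The one substantive difference is in case~(3). You first establish the compatibility observation $\rad(g)\nmid a'$, then pick a prime $p\mid g$ with $p\nmid a'$ and build witnesses $a'n_{j}+b'=b'p^{j\ell}$ with $\ell=\ord_{a'}(p)$. The paper bypasses this detour entirely: since $(a',b')=1$ it simply takes $k:=h\cdot\ord_{a'}(b')+1$ and sets $a'n+b'=(b')^{k}$, so that $\rad(a'n+b')=\rad(b')\mid\rad(b)\mid g$. This avoids any need to check that a suitable prime $p$ exists, and thus the ``main obstacle'' you identify is in fact unnecessary. Your construction works, but the paper's is shorter and does not rely on the side observation.
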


\begin{proof}
\textsc{Assertion }i):
$\bullet$ Case (1):
Assume $a=g\geq2$, then $a'=1$.

Put $n=g^k-b'$ for any $k\in\N$ such that $g^k>b'$ and $n>-b/a=-b'/a'$.
Then $a'n+b'=g^k$ and thus $\rad(a'n+b')=\rad(g)\mid g$.

$\bullet$ Case (2):
Assume $\rad(g)\nmid a'$ and $b=g\geq2$.

There is a $p\mid g$ such that $p\nmid a'$, which implies
$(p,a')=1$. Define $k:=\ord_{a'}(p)\in\N$ to be the multiplicative order
of $p$ modulo $a'$. Then $p^{hk}\equiv1\mod a'$ for all $h\in\N$.
With this, put $n:=(p^{hk}-1)/a'\in\N$ for any $h\in\N$ with $n>-b/a$. %%%
Then
\[\rad(a'n+1)=\rad(p^{hk})=p\mid g.\]

$\bullet$ Case (3):
Assume $b>g$ and $\rad(b)\mid g$.

Since $(a',b')=1$, we define $k:=h\cdot\ord_{a'}(b')+1\geq2$
for any $h\in\N$. Then $(b')^k=b'\mod a'$ and thus
\[a'\mid (b')^k-b'.\]
Put $n:=((b')^k-b')/a'\in\N$ (because $b'>1$). %%%
Then $a'n+b'=(b')^k$
and thus $\rad(a'n+b')=\rad(b')\mid\rad(b)\mid g$.

\medskip
\textsc{Assertion }ii): If $n>-b/a=-b'/a'$, then $a'n+b'>0$,
and thus $a'n+b'\geq1$, since this number is an integer. There is
an $n\in\N$ with $a'n+b'=1$ if and only if $n=(1-b')/a'\in\N$, which
means $1-b'\geq1$ and $a'\mid 1-b'$.

By Lemma~\ref{Lem2.5}, we have $(a',a'n+b')=(a',b')=1$ for all
$n\in\N_{>-b/a}$. If $\rad(g)\mid a'$, then $(g,a'n+b')=1$, 
hence $\rad(a'n+b')\mid g$ can only be true if $a'n+b'=1$.\end{proof}

\begin{example}\label{Exa1}
  Theorem~\ref{Thm3.7} does not yet cover all possible 
  pairs $a,b$. For example, no
statement is given when $a=14$, $b=6$, since then
$g=2$, $a'=7$, $b'=3$.
\end{example}

However, by Theorem~\ref{Thm3.7}ii),
we understand exactly when $n\in\N_{>-b/a}$ with $a'n+b'=1$
exists. Since $1$ divides any $g$, it suffices
to examine the question when $n\in\N_{>-b/a}$ with
$\rad(a'n+b')\mid g$ and $a'n+b'\geq2$ exists.
The following Theorem~\ref{Thm3.8}
reduces this problem to a finite question for all $a\in\N$, $b\in\Z$.

\begin{theorem}\label{Thm3.8}
  Let $a\in\N$, $b\in\Z$ and $g:=(a,b)$, $a':=a/g$, $b':=b/g$.
  Assume that $\tau_{a'}(\rad(g))$ has the prime factorization
  $\tau_{a'}(\rad(g))=q_1\cdots q_s$. Then the following statements
  are equivalent.
\begin{enumerate}
\item $\rho(a,b)=\infty$,
\item There is an $n\in\N_{>-b/a}$ such that $\rad(a'n+b')\mid g$ and
  $a'n+b'\geq 2$,
\item There are $\alpha_1,\dots,\alpha_s\in\N_0$ such that
  $\alpha_1+\cdots+\alpha_s>0$ and
\[q_1^{\alpha_1}\cdots q_s^{\alpha_s}\equiv b'\mod a',\]
\item $\rad(g)\nmid a'$ and there are $\alpha_1,\dots,\alpha_s\in\N_0$ such that
\[q_1^{\alpha_1}\cdots q_s^{\alpha_s}\equiv b'\mod a'.\]
\end{enumerate}
\end{theorem}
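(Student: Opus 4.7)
\textbf{Proof plan for Theorem~\ref{Thm3.8}.} My plan is to establish the cycle $(1)\Rightarrow(2)\Rightarrow(3)\Rightarrow(1)$ and then the equivalence $(3)\Leftrightarrow(4)$ separately. The first implication $(1)\Rightarrow(2)$ is immediate from Theorem~\ref{Thm3.7}(ii): that result shows at most one $n\in\N_{>-b/a}$ can satisfy $a'n+b'=1$, so if $\rho(a,b)=\infty$, then infinitely many $n\in R(a,b)$ must satisfy $a'n+b'\geq 2$, and in particular some such $n$ exists.

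For $(2)\Rightarrow(3)$, take $n$ as in (2). By Lemma~\ref{Lem2.5} we have $(a',a'n+b')=(a',b')=1$, so every prime dividing $a'n+b'$ is coprime to $a'$; combined with $\rad(a'n+b')\mid g$, each such prime belongs to the list $q_1,\dots,q_s$ of prime factors of $\tau_{a'}(\rad(g))$. Hence $a'n+b'=q_1^{\alpha_1}\cdots q_s^{\alpha_s}$ for some $\alpha_i\in\N_0$, with $\alpha_1+\cdots+\alpha_s>0$ because $a'n+b'\geq 2$. Reducing modulo $a'$ yields the required congruence.

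The heart of the proof is $(3)\Rightarrow(1)$, which I expect to be the main obstacle. Given the $\alpha_i$ from (3), I set $Q:=q_1\cdots q_s$; since each $q_i$ is coprime to $a'$, so is $Q$, and the order $K:=\ord_{a'}(Q)$ exists. Then for every $h\in\N$,
\[
q_1^{\alpha_1+hK}\cdots q_s^{\alpha_s+hK}\equiv q_1^{\alpha_1}\cdots q_s^{\alpha_s}\equiv b'\pmod{a'},
\]
so $n_h:=\bigl(q_1^{\alpha_1+hK}\cdots q_s^{\alpha_s+hK}-b'\bigr)/a'$ is an integer, and for $h$ large enough it lies in $\N_{>-b/a}$ and satisfies $\rad(a'n_h+b')\mid Q\mid g$. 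As the right-hand sides grow without bound, the $n_h$ are distinct, giving $\rho(a,b)=\infty$.

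Finally, $(3)\Leftrightarrow(4)$ is a short matter: condition (3) forces $s\geq 1$ (the empty sum cannot be positive), which is precisely $\rad(g)\nmid a'$, so (3) implies (4); conversely, if (4) holds with all $\alpha_i=0$, then $b'\equiv 1\pmod{a'}$, and replacing $\alpha_1$ by $\ord_{a'}(q_1)>0$ produces a tuple satisfying the positivity requirement of (3). Everything outside of $(3)\Rightarrow(1)$ is combinatorial bookkeeping; the construction of the infinite family $n_h$ via the multiplicative order of $Q$ is the decisive step.
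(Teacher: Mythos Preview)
Your argument is correct. The paper proves the single cycle $(1)\Rightarrow(2)\Rightarrow(3)\Rightarrow(4)\Rightarrow(1)$, whereas you close the shorter cycle $(1)\Rightarrow(2)\Rightarrow(3)\Rightarrow(1)$ and handle $(3)\Leftrightarrow(4)$ on the side. The substantive difference lies in the infinitude step. The paper, working from $(4)$, sets $r:=q_1^{\alpha_1}\cdots q_s^{\alpha_s}$ and must split into two cases: if $r\geq 2$ it uses $r^{h\cdot\ord_{a'}(r)+1}$, while if $r=1$ (all $\alpha_i=0$) it falls back on a single prime $p\mid g$, $p\nmid a'$ and the powers $p^{h\cdot\ord_{a'}(p)}$. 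Your construction instead starts from $(3)$, where $\sum\alpha_i>0$ already guarantees a nontrivial product, and multiplies uniformly by $Q^{hK}$ with $Q=q_1\cdots q_s$ and $K=\ord_{a'}(Q)$; this avoids the case split entirely and is arguably cleaner. The paper's route has the minor aesthetic advantage of a single four-term cycle, but your decomposition buys a more uniform treatment of the key constructive step.
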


\begin{proof}
  If $\rho(a,b)=\infty$, there exist infinitely many
  $n\in\N_{>-b/a}$ such that $\rad(a'n+b')\mid g$, hence there exists
  one with $a'n+b'\geq2$ since $a'\geq1$.

  Assume there exists an $n\in\N_{>-b/a}$ such that $\rad(a'n+b')\mid g$ and
  $a'n+b'\geq2$. Due to Lemma~\ref{Lem2.5}, $(a',a'n+b')=(a',b')=1$,
  thus we obtain $\rad(a'n+b')\mid\tau_{a'}(\rad(g))$. Hence
  $a'n+b'=q_1^{\alpha_1}\cdots q_s^{\alpha_s}\geq2$ for some %
  $\alpha_1,\dots,\alpha_s\in\N_0$. Since
  $q_1^{\alpha_1}\cdots q_s^{\alpha_s}\geq2$, we have
  $\alpha_1+\cdots+\alpha_s>0$ and
  $q_1^{\alpha_1}\cdots q_s^{\alpha_s}\equiv b'\mod a'$.

  Assume that there are $\alpha_1,\dots,\alpha_s\in\N_0$
  such that $\alpha_1+\cdots+\alpha_s>0$ and
  $q_1^{\alpha_1}\cdots q_s^{\alpha_s}\equiv b'\mod a'$. Then there exists
  an $m\in\Z$ such that
\[
r:=a'm+b'=q_1^{\alpha_1}\cdots q_s^{\alpha_s},
\]
hence $\rad(r)\mid\tau_{a'}(\rad(g))$. Since $\alpha_1+\cdots+\alpha_s>0$,
we deduce $r\geq2$ and $\tau_{a'}(\rad(g))\geq2$, which implies
$\rad(g)\nmid a'$.

Finally, assume $\rad(g)\nmid a'$ and that there exist
$\alpha_1,\dots,\alpha_s\in\N_0$ such that
$q_1^{\alpha_1}\cdots q_s^{\alpha_s}\equiv b'\mod a'$.
We show that there are infinitely many $n\in\N_{>-b/a}$ such that
$\rad(a'n+b')\mid g$, i.e.\ $\rho(a,b)=\infty$. Let $r\in\N$ and $m\in\Z$
be as above. First, we assume $r=1$, which implies $b'=1-a'm$.
By the assumption $\rad(g)\nmid a'$, there exists $p\mid g$ such that
$p\nmid a'$ and thus $(p,a')=1$. Put $k:=\ord_{a'}(p)\in\N$, then
$p^{hk}\equiv1\mod a'$ for all $h\in\N$, thus $a'\mid p^{hk}-1$.
Define $\ell_h:=(p^{hk}-1)/a'\in\N$, with $h$ being chosen sufficiently
large such that $\ell_h+m>0$ and $\ell_h+m>-b/a$. Then
\[
p^{hk}
=a'\ell_h+1
=a'\ell_h+a'm+1-a'm
=a'(\ell_h+m)+b'.
\]
For $n:=\ell_h+m\in\N_{>-b/a}$ we deduce $\rad(a'n+b')=p\mid g$, and
since $h$ can be arbitrarily large, we obtain infinitely many $n$ with
this property, hence $\rho(a,b)=\infty$.

Now let $r\geq2$. Since $(a',r)=1$, we define $k:=\ord_{a'}(r)\in\N$.
Then $r^{hk}\equiv1\mod a'$ for all $h\in\N$, hence $a'\mid r^{hk+1}-r$.
Using this, define $\ell_h:=(r^{hk+1}-r)/a'\in\N$, where $h$ is chosen
sufficiently large such that $\ell_h+m>0$ and $\ell_h+m>-b/a$, which
is possible because of $r\geq2$. Then
\[
r^{hk+1}
=a'\ell_h+r
=a'\ell_h+a'm+b'
=a'(\ell_h+m)+b'.
\]
Put $n:=\ell_h+m\in\N_{>-b/a}$, then $\rad(a'n+b')=\rad(r)\mid g$.
Since $h$ can be arbitrarily large, we obtain infinitely many $n$
with this property, hence $\rho(a,b)=\infty$.\end{proof}

Let $a\in\N,b\in\Z$ be integers such that $\rho(a,b)\geq2$.
Then there exist positive integers $n_1>n_2>-b/a$ such that
$\rad(a'n_i+b')\mid g$ for $i=1,2$. Now $a'n_1+b'>a'n_2+b'>0$, 
thus $a'n_1+b'\geq2$, and Theorem~\ref{Thm3.8}
yields $\rho(a,b)=\infty$. This implies $\rho(a,b)\in\{0,1,\infty\}$
for all $a\in\N,b\in\Z$. %
From Examples~\ref{Exa1b}, \ref{Exa2} and \ref{Exa3} below,
we know that $\rho$ actually hits these three values.

Note that $a'$ and $b'$ do not correlate with $g$. It is only required that
$a'$ and $b'$ are coprime, where $g$ can be chosen independently.
In retrospect, we define then $a:=a'g$ and $b:=b'g$.

For every $1\leq i\leq s$, the sequence $q_i^0,q_i^1,q_i^2,q_i^3,\dots$
repeats modulo $a'$ with period $\ord_{a'}(q_i)$.
The question
if there exists a tuple $(\alpha_1,\dots,\alpha_s)\in\N_0^{s}$ such that
$q_1^{\alpha_1}\cdots q_s^{\alpha_s}\equiv b'\mod a'$ can therefore
be answered in finitely many steps using the following algorithm.

\begin{algo}\label{Algo1}
\medskip
\texttt{Input:} $a\in\N,b\in\Z$

\texttt{Define} $g:=(a,b)$, $a':=a/g$, $b':=b/g$.

\texttt{Determine prime factorization} $\tau_{a'}(\rad(g))=q_1\cdots q_s$.

\texttt{For} $0\leq\alpha_1<\ord_{a'}(q_1)$, \dots, 
$0\leq\alpha_s<\ord_{a'}(q_s)$:

\quad \texttt{If} $q_1^{\alpha_1}\cdots q_s^{\alpha_s}\equiv b'\mod a'$:

\quad\quad \texttt{Return true.}

\texttt{Return false.}
\end{algo}

\medskip
Note that in the case $\rad(g)\mid a'$ we have $s=0$
and Algorithm~\ref{Algo1} jumps immediately to the last line
and returns false.

\medskip
We intend to run this algorithm for many pairs $a,b$, and we 
improve upon the running time significantly if we store every possible value
of $q_1^{\alpha_1}\cdots q_s^{\alpha_s}\mod a'$ for each pair
$a',\tau_{a'}(\rad(g))$. If it happens that we stumble upon
the same pair $a',\tau_{a'}(\rad(g))$ again, we simply check whether
$b'\mod a'$ is contained in the previously stored set.

\bigskip%
The next algorithm determines $\rho(a,b)$ for all $a\in\N$, $b\in\Z$,
due to Theorem~\ref{Thm3.7}ii).

\begin{algo}\label{Algo2}

\texttt{Input:} $a\in\N$, $b\in\Z$

\texttt{Define} $g:=(a,b)$, $a':=a/g$, $b':=b/g$.

\texttt{Determine prime factorization} $\tau_{a'}(\rad(g))=q_1\cdots q_s$.

\texttt{If} $\rad(g)\nmid a'$ \texttt{and there are}
$\alpha_1,\dots,\alpha_s\in\N_0$ \texttt{with}
$q_1^{\alpha_1}\cdots q_s^{\alpha_s}\equiv b'\mod a'$:

\quad\texttt{Return} $\rho(a,b)=\infty$.

\texttt{If} $b'\leq0$ and $a'\mid1-b'$:

\quad\texttt{Return} $\rho(a,b)=1$.

\texttt{Return} $\rho(a,b)=0$.
\end{algo}

\begin{example}\label{Exa1b}
Recalling our previous example $a=14$, $b=6$ in Example~\ref{Exa1},
we detect an answer to the question raised there.
We have $g=2$, $a'=7$, $b'=3$, but
\[
2^1\equiv2\mod7,\qquad
2^2\equiv4\mod7,\qquad
2^3\equiv1\mod7.
\]
Thus $\rho(14,6)=0$, so there exists no $n\in\N$
with $\rad(7n+3)\mid2$.
\end{example}

\begin{example}\label{Exa2}
  Consider the example $a=1$, $b=0$. Then $g=1$, $a'=1$, $b'=0$
and $\rad(g)\mid a'$, thus $\rho(1,0)=1$ by Theorem~\ref{Thm3.7}ii).
\end{example}
\begin{example}\label{Exa3}
Finally, let $a=6$, $b=10$. Then $g=2$, $a'=3$, $b'=5$ and thus
$g\equiv b'\mod a'$. This means $\rho(6,10)=\infty$, hence there
are infinitely many $n\in\N$ such that $\rad(3n+5)\mid2$.
The sequence of these $n$ starts with $1,9,41,169,681,\dots$, and it
can be shown that its $k$-th element is equal to $(2^{2k+1}-5)/3$.
Its growth can be described more precisely
by Theorem~\ref{Thm3.11}.
\end{example}

Figure \ref{Fig1} illustrates $\rho(a,b)$
for all $1\leq a\leq2001$ and $-1000\leq b\leq1000$ with a black
pixel when $\rho(a,b)=\infty$, a red pixel when $\rho(a,b)=1$,
and a white pixel when $\rho(a,b)=0$. 
%
% To the copy editor: Please keep the figure at this place if possible,
%  thank you ;-)
%
\begin{figure}\caption{Points with $\rho(a,b)=\infty$ (black),
    $\rho(a,b)=1$ (red) and $\rho(a,b)=0$ (white)}
\label{Fig1}
\begin{center}
\pgfkeys{/pgf/number format/1000 sep={}}
\begin{tikzpicture}
\begin{axis}
[xlabel=$a$,ylabel=$b$,
width=350,height=350,
enlargelimits=false,
axis lines=left,
ylabel style={rotate=-90}]
\addplot graphics
[xmin=1,xmax=2001,ymin=-1000,ymax=1000]
{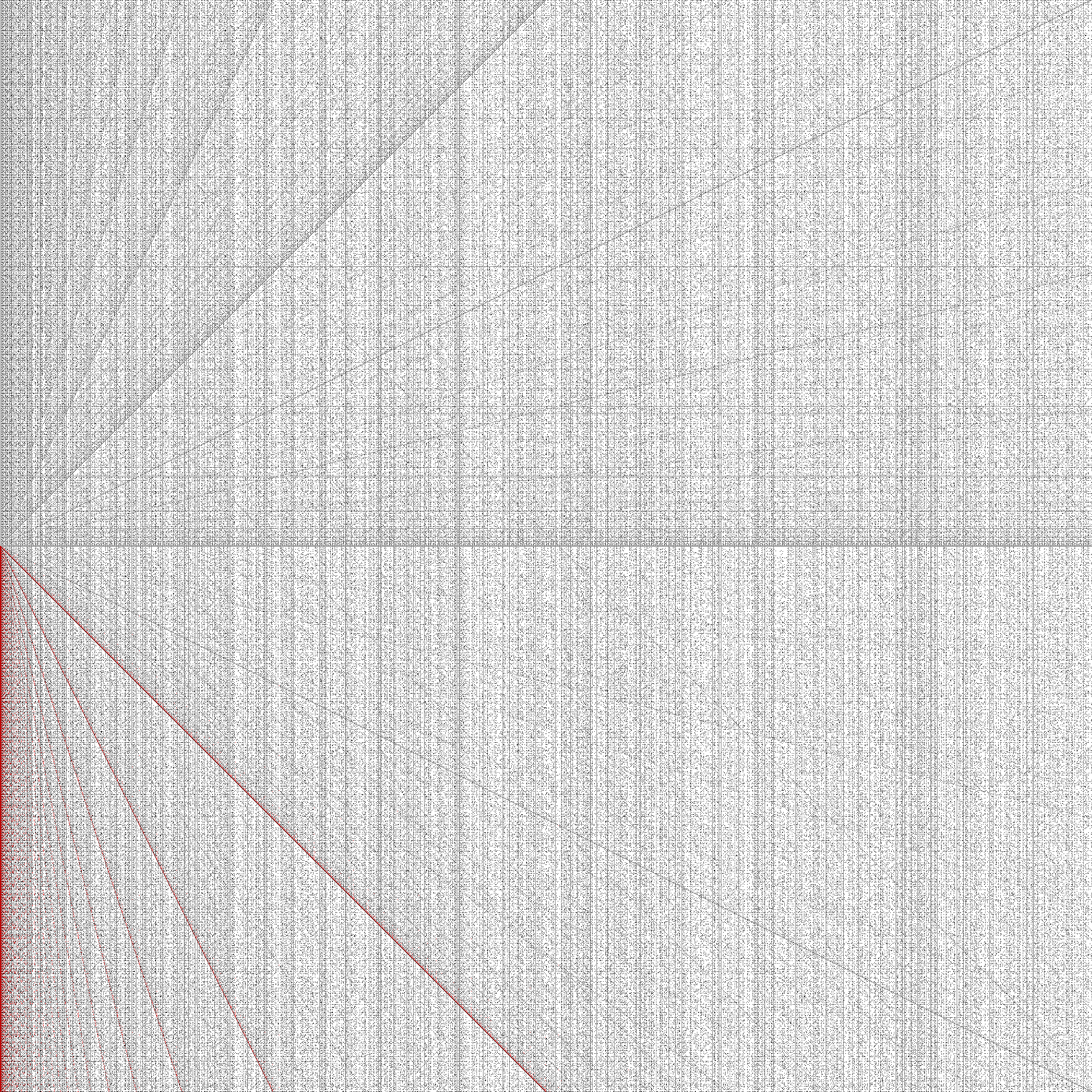};
\end{axis}
\end{tikzpicture}
\end{center}
\end{figure}

The most striking feature of this diagram are the dotted lines
where $a\geq2$ and $b=ka$ for some $k\in\Z$, which illustrates
Theorem~\ref{Thm3.7}i). %%%

Furthermore, we clearly observe three dotted lines emerging at 45 degrees
from each other, originating at the $b$-axis where $b\in\{128,256,512\}$.
These visible lines emerge for any prime power $p^k$
and they become thinner for larger primes $p$.
An explanation to this phenomenon is due to the
following Theorem.

\begin{theorem}\label{Thm3.9}
  For all primes $p$ and $a,k\in\N$, $x\in\Z$,
\[
\rho(a,p^k+ax)=
\begin{cases}
\infty,&\text{if }p\mid a\text{ and }p^{k+1}\nmid a,\\
1,&\text{if }x\leq -p^k/a\text{ and }a\mid p^k-1,\\
1,&\text{if }x\leq -p^k/a\text{ and }p^{k+1}\mid a,\\
0,&\text{else}.
\end{cases}
\]
\end{theorem}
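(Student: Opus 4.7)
The plan is to apply the characterization of $\rho(a,b)$ provided by Algorithm~\ref{Algo2} (equivalently, the combination of Theorem~\ref{Thm3.7}(ii) and Theorem~\ref{Thm3.8}) to $b := p^{k}+ax$. Since $b\equiv p^{k}\pmod{a}$, Lemma~\ref{Lem2.5} gives $g:=(a,b)=(a,p^{k})$. Writing $p^{j}\Mid a$ for some $j\in\N_{0}$ (so $j=0$ means $p\nmid a$), we have $g=p^{\min(j,k)}$, $a':=a/g$ and $b':=b/g$. The natural case split is then according to whether $j=0$, $1\leq j\leq k$, or $j\geq k+1$, which matches the three substantive branches in the claimed formula.

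In the first case ($p\nmid a$), $g=1$, $a'=a$, $b'=p^{k}+ax$, and $\tau_{a'}(\rad(g))=1$, so the index $s$ in Theorem~\ref{Thm3.8} is zero and condition~(4) fails; hence $\rho(a,b)\neq\infty$. By Theorem~\ref{Thm3.7}(ii), $\rho(a,b)=1$ iff $b'\leq 0$ and $a'\mid 1-b'$; the first rearranges to $x\leq -p^{k}/a$, and since $a\mid ax$ the second becomes $a\mid p^{k}-1$, yielding the second branch of the theorem, with $\rho(a,b)=0$ otherwise. In the third case ($p^{k+1}\mid a$), $g=p^{k}$ and $a'=a/p^{k}$ is still divisible by $p$, so $\rad(g)=p$ divides $a'$ and Theorem~\ref{Thm3.8}(4) fails; thus $\rho(a,b)\neq\infty$. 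Now $b'=1+a'x$, so $a'\mid 1-b'=-a'x$ holds automatically, while $b'\leq 0$ is equivalent to $x\leq -p^{k}/a$. This gives the third branch, or $\rho(a,b)=0$ otherwise.

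The substantive case is $1\leq j\leq k$, i.e., $p\mid a$ and $p^{k+1}\nmid a$. Here $g=p^{j}$, $a'=a/p^{j}$ is coprime to $p$, so $\tau_{a'}(\rad(g))=p$ and $\rad(g)=p\nmid a'$, and it only remains to exhibit an $\alpha\in\N_{0}$ with $p^{\alpha}\equiv b'\pmod{a'}$: when $j=k$, $b'=1+a'x\equiv 1=p^{0}\pmod{a'}$, and when $j<k$, $b'=p^{k-j}+a'x\equiv p^{k-j}\pmod{a'}$. Either way Theorem~\ref{Thm3.8}(4) applies and $\rho(a,b)=\infty$, giving the first branch.

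The argument is essentially careful bookkeeping; the one mildly subtle point is correctly splitting Case~2 into the sub-cases $j=k$ and $j<k$, since the shape of $b'$ modulo $a'$ depends on this distinction. A final consistency check is that the branches do not overlap inconsistently: if $a\mid p^{k}-1$ then $p\nmid a$ (since $p\mid a$ would force $p\mid 1$), so the second and third branches are mutually exclusive, and both are disjoint from the first.
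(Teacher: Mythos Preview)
Your proof is correct and follows essentially the same approach as the paper: both set $b=p^{k}+ax$, introduce the $p$-adic valuation of $a$ (your $j$, the paper's $\ell$), compute $g=p^{\min(j,k)}$, and then split into the three cases $j=0$, $1\le j\le k$, $j\ge k+1$, invoking Theorem~\ref{Thm3.8} for the middle case and Theorem~\ref{Thm3.7}(ii) for the other two. The only cosmetic difference is that you further split the middle case into $j<k$ and $j=k$, whereas the paper treats both at once via $b'\equiv p^{k-\ell}\pmod{a'}$ with $k-\ell\ge 0$; this sub-split is harmless but unnecessary.
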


\begin{proof}
  Let $b=p^k+ax$ and let $\ell\in\N_0$ be the largest integer
  such that $p^\ell\mid a$. This yields $b-ax=p^k$ and thus
  $g\mid p^k$. Note that if $\ell\leq k$, then $p^\ell\mid b$
  and thus $g=p^\ell$.

  If $1\leq\ell\leq k$, then $g=p^\ell\geq2$ and
  $b'=p^{k-\ell}+a'x\equiv p^{k-\ell}\mod a'$. Now since
  $\rad(g)=p\nmid a'$ and $k-\ell\geq0$, we obtain by
  Theorem~\ref{Thm3.8} that $\rho(a,b)=\infty$.

  If $\ell=0$, then $p\nmid a$, so $g=1$, $a=a'$, $b=b'$ and by
  Theorem~\ref{Thm3.7}ii), we have $\rho(a,b)=1$ if $b\leq0$
  and $a\mid 1-b=1-p^k-ax$, which is equivalent to $p^k\leq-ax$
  and $a\mid p^k-1$, otherwise we have $\rho(a,b)=0$.
  Note that from $a\mid p^k-1$, it already follows that $p\nmid a$.

  If $\ell>k$, then $g=p^k$ and $\rad(g)=p\mid a'$ and
  by Theorem~\ref{Thm3.7}ii) it follows that $\rho(a,b)=1$
  if $b'=1+a'x\leq0$ and $a'\mid 1-b'=-a'x$, which is equivalent
  to $x\leq -1/a'$. Otherwise we have $\rho(a,b)=0$.
  This completes the proof.\end{proof}

The case $p=2$ and $x\in\{-1,0,1\}$ then gives us the %clearly
visible dotted lines in the graph emerging at the $b$-axis
where $b$ is a power of 2.

\subsection{Growth of the counting function of $R(a,b)$}\label{sec5.3}
We have now seen that $\rho(a,b)=\infty$ for infinitely
many pairs $a,b$. Our next question is then: How frequent are
the $n\in R(a,b)$, that is, how quickly does
the counting function $\#\{n\in R(a,b);\ n\leq x\}$
grow when $x\to\infty$?

\bigskip
We obtain the following asymptotic formula %
in terms of the function $\rho_0(a,b)\in\N_0$ from Definition~\ref{Def3.5}.

 \begin{theorem}\label{Thm3.11}
   Let $a\in\N$, $b\in\Z$ and put $g:=(a,b)$, $a':=a/g$, $b':=b/g$.
   Let %
   $\tau_{a'}(\rad(g))=q_1\cdots q_s$ be the prime factorization of
   $\tau_{a'}(\rad(g))$. For $1\leq i\leq s$, let
   $k_i:=\ord_{a'}(q_i)$. If $\rho(a,b)=\infty$, then $s\geq1$,
   $\rho_0(a,b)\geq1$, and for $x\to\infty$ we have %%%
 \[
 \#\{n\in R(a,b);\ n\leq x\}
 =\frac{\rho_0(a,b)}{s!\log(q_1^{k_1})\cdots\log(q_s^{k_s})}\cdot\log(x)^s
 +O(\log(x)^{s-1}).
 \]
 \end{theorem}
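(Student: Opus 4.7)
The plan is to parametrize $R(a,b)$ explicitly and reduce the count to a lattice-point problem in a simplex, whereupon Lemma~\ref{LemTen} yields the asymptotic. Since $(a', a'n+b')=(a',b')=1$ by Lemma~\ref{Lem2.5}, the condition $\rad(a'n+b')\mid g$ is equivalent to $\rad(a'n+b')\mid\tau_{a'}(\rad(g))=q_1\cdots q_s$. Hence $R(a,b)$ is in bijection with the tuples $(\alpha_1,\dots,\alpha_s)\in\N_0^s$ for which $q_1^{\alpha_1}\cdots q_s^{\alpha_s}\equiv b'\mod a'$ and $n:=(q_1^{\alpha_1}\cdots q_s^{\alpha_s}-b')/a'\geq 1$.

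Exploiting the orders $k_i=\ord_{a'}(q_i)$ (well-defined because $(a',q_i)=1$), I would write $\alpha_i=\beta_i+k_i\gamma_i$ with $\beta_i\in\{0,\dots,k_i-1\}$ and $\gamma_i\in\N_0$. Since $q_i^{k_i}\equiv 1\mod a'$, the congruence reduces to $\prod_i q_i^{\beta_i}\equiv b'\mod a'$ and depends only on $\beta$. I would then verify that the map $\beta\mapsto (\prod_i q_i^{\beta_i}-b')/a'$ gives a bijection between such \emph{residue solutions} $\beta$ and the set $R_0(a,b)$; consequently $\rho_0(a,b)$ equals the number of residue solutions. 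The hypothesis $\rho(a,b)=\infty$ together with Theorem~\ref{Thm3.8} yields $\rad(g)\nmid a'$ (hence $s\geq 1$) and at least one witnessing tuple; reducing its components modulo the $k_i$ produces a residue solution, so $\rho_0(a,b)\geq 1$.

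Fixing a residue solution $\beta$, the constraint $n\leq x$ becomes, after taking logarithms,
\[
\sum_{i=1}^{s}\gamma_i\,\log(q_i^{k_i})\leq\log(a'x+b')-\sum_{i=1}^{s}\beta_i\log q_i=:z_\beta(x).
\]
Lemma~\ref{LemTen} with parameters $a_i=\log(q_i^{k_i})$ counts these $\gamma\in\N_0^s$ as
\[
\frac{z_\beta(x)^s}{s!\,\prod_{i=1}^s\log(q_i^{k_i})}+O\bigl(z_\beta(x)^{s-1}\bigr),
\]
and since $z_\beta(x)=\log(x)+O(1)$ as $x\to\infty$, this equals $\log(x)^s/(s!\prod_i\log(q_i^{k_i}))+O(\log(x)^{s-1})$. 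Summing over the $\rho_0(a,b)$ residue solutions produces the claimed main term. The main obstacle is essentially bookkeeping: confirming that the bijection between residue solutions and $R_0(a,b)$ is exact (so that $\rho_0(a,b)$, rather than some other count, appears in the leading coefficient), and verifying that the restriction $n\in\N$ instead of $n\in\Z_{>-b/a}$ excludes only boundedly many $\gamma$ per $\beta$ — an $O(1)$ correction absorbed in $O(\log(x)^{s-1})$.
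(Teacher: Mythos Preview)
Your proposal is correct and follows essentially the same route as the paper's proof: parametrize $a'n+b'$ as $\prod_i q_i^{\alpha_i}$, split each exponent as $\alpha_i=\beta_i+k_i\gamma_i$, identify the admissible residue tuples $\beta$ with $R_0(a,b)$, and then count the $\gamma\in\N_0^s$ via Lemma~\ref{LemTen}. The paper phrases the decomposition as a map $f:R(a,b)\to R_0(a,b)$ with fibers indexed by $m\in R_0(a,b)$, but this is exactly your fixing of $\beta$; the lattice-point count, the use of $z=\log(x)+O(1)$, and the absorption of the finitely many excluded $n$ into the error term are all handled the same way.
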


\begin{proof}
  Since $\rho(a,b)=\infty$, Theorem~\ref{Thm3.8} yields $\rad(g)\nmid a'$
  and thus $\tau_{a'}(\rad(g))\geq2$ and $s\geq1$. Let $n\in R(a,b)$.
  By Lemma~\ref{Lem2.5}, $(a',a'n+b')=(a',b')=1$, and hence
  $\rad(a'n+b')\mid\tau_{a'}(\rad(g))$. This means that there are
  integers $\alpha_1,\dots,\alpha_s\in\N_0$ such that
\[a'n+b'=\prod_{i=1}^s q_i^{\alpha_i}.\]
Since $q_i^{k_i}\equiv1\mod a'$, we obtain
\[b'\equiv\prod_{i=1}^s q_i^{\alpha_i+k_ih_i} \mod a'\]
for all $h_i\in\Z$ such that $\alpha_{i}+k_{i}h_{i}\geq 0$.

Therefore, there is a unique $m\in\Z$ such that
\[a'm+b'=\prod_{i=1}^s q_i^{\beta_i}\]
with $\beta_i\equiv\alpha_i\mod k_i$ and $0\leq\beta_i<k_i$
for all $1\leq i\leq s$. Note that $m>-b'/a'$, which implies
$m\in R_0(a,b)$. This induces a map $f:R(a,b)\to R_0(a,b)$ that we
use to split $R(a,b)$ into pairwise disjoint sets, i.e.\ the
inverse image $f^{-1}(m)$ of each $m\in R_0(a,b)$, namely
\[R(a,b)=\dot{\bigcup_{m\in R_0(a,b)}}f^{-1}(m).\]
Since these sets are disjoint, we obtain
\begin{equation}\label{e1}
  \#\{n\in R(a,b);\ n\leq x\}=\sum_{m\in R_0(a,b)}
    \#\{n\in f^{-1}(m);\ n\leq x\}.%\tag{1}
\end{equation}
Let $n\in f^{-1}(m)$. Then $n>0$ and
$a'n+b'=(a'm+b')\prod_{i=1}^s q_i^{k_ih_i}$ for some
$h_1,\dots,h_s\in\N_0$, thus
\begin{multline*}
f^{-1}(m)=\Big\{
\frac{(a'm+b')\prod_{i=1}^s q_i^{k_ih_i}-b'}{a'};\ 
h_1,\dots,h_s\in\N_0,\\
\prod_{i=1}^s q_i^{k_ih_i}>\frac{b'}{a'm+b'}
\Big\}
\end{multline*}
for all $m\in R_0(a,b)$. The condition
$\prod_{i=1}^s q_i^{k_ih_i}>\frac{b'}{a'm+b'}$ ensures
that $f^{-1}(m)$ contains only positive integers,
but clearly, this condition only excludes finitely many values.
This means that for $x\to\infty$,
\begin{equation}\label{e2}
  \#\{n\in f^{-1}(m);\ n\leq x\}=F(m,x)+O(1),%\tag{2}
\end{equation}
where
\begin{multline*}
F(m,x):=
\#\left\{
\frac{(a'm+b')\prod_{i=1}^s q_i^{k_ih_i}-b'}{a'}\leq x;\ 
h_1,\dots,h_s\in\N_0
\right\}\\
=
\#\left\{
\prod_{i=1}^s q_i^{k_ih_i}\leq \frac{a'x+b'}{a'm+b'};\
h_1,\dots,h_s\in\N_0
\right\}.
\end{multline*}

In this situation, we apply Lemma~\ref{LemTen}
with $k=s$, $z=\log\frac{a'x+b'}{a'm+b'}$,
$a_{i}=k_{i}\log q_{i}=\log(q_{i}^{k_{i}})$, therefore we obtain
\begin{equation}\label{e3}
F(m,x)
=
\frac{\log\left(\frac{a'x+b'}{a'm+b'}\right)^s}
{s!\log(q_1^{k_1})\cdots\log(q_s^{k_s})}
+O\left(\log\left(\frac{a'x+b'}{a'm+b'}\right)^{s-1}\right),%\tag{3}
\end{equation}
note that the implicit constant depends on the variables
$q_{1},\dots,q_{s}$, $k_{1},\dots,k_{s}$ which in turn only depend
on $a$ and $b$ and are thus independent of $x$.
Simplifying the argument of the logarithm yields
\begin{equation*}
\log(a'x+b')^s
=\log(x)^s + O(\log(x)^{s-1})
\end{equation*}
and thus
\begin{equation*}
\log\left(\frac{a'x+b'}{a'm+b'}\right)^s
=\log(x)^s + O(\log(x)^{s-1}).
\end{equation*}
From \eqref{e3}, we obtain
\[
F(m,x)
=
\frac{\log(x)^s}
{s!\log(q_1^{k_1})\cdots\log(q_s^{k_s})}
+O(\log(x)^{s-1}).
\]
Then \eqref{e2} yields
\[
\#\{n\in f^{-1}(m);\ n\leq x\}=
\frac{\log(x)^s}
{s!\log(q_1^{k_1})\cdots\log(q_s^{k_s})}
+O(\log(x)^{s-1}).
\]
Note that the terms in this asymptotic formula do not depend on $m$.
Finally, by \eqref{e1} and $\#R_0(a,b)=\rho_0(a,b)$ we deduce
\[
\#\{n\in R(a,b);\ n\leq x\}
=\frac{\rho_0(a,b)}
{s!\log(q_1^{k_1})\cdots\log(q_s^{k_s})}
\cdot\log(x)^s
+O(\log(x)^{s-1}),
\]
which was to be proved.\end{proof}

%---------------------------------------------------------------
\subsection{The number of pairs $a,b$ with $\rho(a,b)=\infty$}\label{sec5.4}
\bigskip
Finally, we discuss in this last section the number of
pairs $1\leq a,b\leq x$ with $\rho(a,b)=\infty$, which leads us to
an interesting open question. Thus we examine the sum
\begin{equation}\label{eq:upb}
  \sum_{\substack{1\leq a,b\leq x\\\rho(a,b)=\infty}} 1 \leq \sum_{1\leq a,b\leq x} 1 \leq x^{2}.
\end{equation}

Let $g>1$ be a fixed integer.
The primes $q_i$ with $q_1\cdots q_s=\tau_{a'}(\rad(g))$ can be considered as
being independent of $a'$, since if we fix the prime divisors
$q_{1},\dots,q_{s}$ of $g$, then the positive integers $a'$ need to be taken with
$(a',q_1\cdots q_s)=1$. %%%, which excludes only finitely many $a'$.
Hence, we obtain the lower bound
\begin{equation}\label{eq:verallgArn}
\sum_{\substack{1\leq a,b\leq x\\ \rho(a,b)=\infty}} 1
\geq \sum_{\substack{a'\leq x/g\\ (a',g)=1}}
\#\{q_1^{\alpha_1}\cdots q_s^{\alpha_s} \mod a';\
\alpha_1,\dots,\alpha_s\in\N_0\}.
\end{equation}

Since the terms % 
$q_{1}^{\alpha_{1}}\dots q_{s}^{\alpha_{s}}$ generate a subgroup
of the multiplicative group mod $a'$
that contains all powers $q_1^{\alpha_1}$, 
we continue the lower estimate with
\begin{align}
  \sum_{\substack{1\leq a,b\leq x\\ \rho(a,b)=\infty}} 1
  &\geq \sum_{\substack{a'\leq x/g\\ (a',q_{1}\cdots q_{s})=1}}
  \#\{q_1^{\alpha_1}\mod a';\ \alpha_1\in\N_0\} \label{eq:lowest} \\
  &= \sum_{\substack{a'\leq x/g\\(a',q_{1})=1}}
  \#\{q_1^{\alpha_1}\mod a';\ \alpha_1\in\N_0\}
  \sum_{d\mid(a',q_{2}\cdots q_{s})} \mu(d) \notag \\
  &=\sum_{d\mid q_{2}\cdots q_{s}} \mu(d)
  \sum_{\substack{\tilde{a}\leq x/dg\\(\tilde{a},q_{1})=1}}
  \#\{q_1^{\alpha_1}\mod (d\tilde{a});\ \alpha_1\in\N_0\} \notag \\
    &= \sum_{d\mid q_{2}\cdots q_{s}} \mu(d) 
 \sum_{\substack{\tilde{a}\leq x/dg\\(\tilde{a},q_{1})=1}}
  \ord_{d\tilde{a}}(q_{1}).  \label{eq:ord1}
\end{align}

%%%
Note that in the special case when $s=1$,
this calculation simplifies to
\begin{equation}\label{eq:ord2}
  \sum_{\substack{1\leq a,b\leq x\\ \rho(a,b)=\infty}} 1 \geq
  \sum_{\substack{\tilde{a}\leq x/g\\(\tilde{a},q_{1})=1}} \ord_{\tilde{a}}(q_{1}).
\end{equation}
But when $s>1$, the necessary
condition $(a',q_1\cdots q_s)=1$ on the right hand side of
\eqref{eq:lowest} is more restrictive than $(a',q_1)=1$,
thus we may not simplify like this in general.

\medskip
The last sum appearing in \eqref{eq:ord1} and \eqref{eq:ord2}
has been studied in the literature, but considered 
with mean value in $q_1$. The growth of the average
order of elements (and similarly, of one element taken for many moduli)
is a famous question raised by Arnold in \cite{Arn},
see also \cite{KurPom}.

From the results of Luca and Shparlinski
in \cite{LucShp}, namely the asymptotic formula
\[
  \frac{1}{x}\sum_{n<x}\frac{1}{\phi(n)}\sum_{\substack{q_{1}<n\\(n,q_{1})=1}}
  \ord_{n}(q_{1})=\frac{x}{\log x} \log((1+o(1))b\log\log x/\log\log\log x),
\]
we deduce with $\phi(n)\gg n/\log\log n$ that 
\begin{multline*}
  \frac{1}{x}\sum_{q_{1}\leq x}\sum_{\substack{q_{1}<n\leq x\\(n,q_{1})=1}} \ord_{n}(q_{1})
  \geq \sum_{x/2<n<x} \phi(n) \cdot\frac{1}{x}\cdot \frac{1}{\phi(n)}
  \sum_{\substack{q_{1}<n\\(n,q_{1})=1}} \ord_{n}(q_{1}) \\
  \gg \frac{x}{\log\log x}\cdot \frac{x}{\log(x)}
  \log(b\log\log x/\log\log\log x) \\
  = \frac{x^2}{\log x \log\log x} \log(b\log\log x/\log\log\log x)
\end{multline*}
for some constant $b>0$.
Thus, in the mean value over $q_1$, we get a lower bound pretty close
to $x^2$. A successor paper of Kim \cite{Kim} shows that this lower bound
is also admissible when taking a mean value over $q_1$ in the shorter range
$q_{1}\leq y$ with $x^{1-\delta}=o(y)$ for some $\delta>0$. But it seems that
no good result is known if $q_{1}$ is fixed.

Consider the case that $a'$ is a prime such that $q_{1}$ is a primitive
root mod $a'$. Then the powers $q_{1}^{\alpha_{1}}$ run through all reduced
residues mod $a'$, so that $\ord_{a'}(q_{1})=a'-1$. This yields
for large $x$ that
\begin{multline*}
  \sum_{\substack{1\leq a,b\leq x\\\rho(a,b)=\infty}} 1
  \geq \sum_{1\leq a\leq x} \sum_{\substack{1\leq b\leq a\\ \rho(a,b)=\infty}}1
  \geq
  \sum_{\substack{x/2g\leq a'\leq x/g\\ a'\text{prime} \\q_{1} \text{ primitive root} \mod a'}}
  (a'-1) \\ \gg x\sum_{\substack{x/2g<a'\leq x/g\\q_1 \text{ primitive root} \mod a'}}1
  \gg  \frac{x^{2}}{\log x},
\end{multline*}
assuming in the last step Artin's primitive root conjecture which states that
the number of primes $\leq x$ for which some $q$ 
is a primitive root, is $=(1+o(1))x/\log x$.

\begin{openq}\label{openq}
Thus we know that the sum
\[
  \sum_{\substack{a\leq x\\ (a,q_{1}\cdots q_{s})=1}}
  \#\{q_1^{\alpha_1}\cdots q_s^{\alpha_s} \mod a;
 \ \alpha_1,\dots,\alpha_s\in\N_0\}
\]
is trivially bounded by $\ll x^2$, and heuristically by
$\gg x^2/\log x$ from unsolved conjectures. The summand is the order of the
subgroup generated by $q_{1},\dots,q_{s}$ in the multiplicative group mod $a$.
It remains to be an \emph{open question} if there exists an asymptotic
  formula for this quantity, or even to decide if it is $=o(x^2)$. 
  This question can be seen as a generalization of Arnold's problem
  in \cite{Arn}, cp.~\cite{KurPom}.
\end{openq}
 
%------------------------------------------------------------
\section*{Acknowledgements}
We thank the referee for a careful reading and many
valuable suggestions on the first version of the manuscript. 

%------------------------------------------------------------


\begin{thebibliography}{20}

\bibitem{Arn}  
V.~Arnold, Number-theoretical turbulence in Fermat-Euler arithmetics and
large Young diagrams geometry statistics. J. Math. Fluid Mech.,
7(suppl. 1):S4--S50, 2005.

  
\bibitem{GLSU}
S.~R.~Garcia, F.~Luca, K.~Shi, and G.~Udell,
Primitive root bias for twin primes II: Schinzel-type theorems for
totient quotients and the sum-of-divisors function.  
J. Number Theory 208 (2020), 400--417. 

\bibitem{Kim}
S.~Kim, On the order of $a$ modulo $n$, on average. 
Int. J. Number Theory 12 (2016), no. 8, 2073--2080.

\bibitem{KurPom}
P.~Kurlberg and C.~Pomerance, 
On a problem of Arnold: the average multiplicative order of a
given integer. Algebra Number Theory 7 (2013), no. 4, 981--999.

\bibitem{LucShp}
 F.~Luca, I.~E.~Shparlinski, 
Average multiplicative orders of elements modulo n.
Acta Arith. 109 (2003), no. 4, 387--411. 

\bibitem{schinzel}
  A.~Schinzel: \textit{Generalization of a theorem of B.S.K.R. Somayajulu
    on the Euler's function $\phi(n)$} from the journal "Ganita",
  volume 5, No.2 (1954).


\bibitem{schsp} 
Schinzel, A.; Sierpi\'nski, W.
Sur quelques propri\'et\'es des fonctions $\phi(n)$ et $\sigma(n)$. 
Bull. Acad. Polon. Sci. Cl. III. 2 (1954), 463--466 (1955).


\bibitem{Tenenbaum} G.~Tenenbaum, 
Introduction to analytic and probabilistic number theory. 
Translated from the second French edition (1995) by C.~B.~Thomas.
Cambridge Studies in Advanced Mathematics, 46. Cambridge University Press,
Cambridge, 1995.

\bibitem{arithMertens}
  K.~S.~Williams: \textit{Mertens' Theorem for Arithmetic Progressions}.
  Journal of Number Theory 6 (1974), 353--359. 

  
\end{thebibliography}
\end{document}